\def\cSU{\mathop{\cal SU}\nolimits}
\def\pmat#1{\begin{pmatrix}#1\end{pmatrix}}
\def\question#1{{\bf Question: }#1}
\def\question#1{}
\def\cD{{\cal D}}
\def\cT{{\cal T}}
\def\cI{{\cal I}}
\def\cO{{\cal O}}
\def\R{\mathbb{R}}
\def\CC{\mathbb{C}}
\def\FF{\mathbb{F}}
\def\ZZ{\mathbb{Z}}
\def\Cd{\C^d}
\def\Rd{\R^d}
\def\C{\mathbb{C}}
\def\SS{\mathbb{S}}
\newcommand{\RR}{\mathbb{R}}
\newtheorem{theorem}{Theorem}[section]
\newtheorem{lemma}{Lemma}[section]
\newtheorem{example}{Example}[section]
\newenvironment{proof}{{\noindent \it
Proof.}}{\hfill$\Box$\medskip}
\newif\ifdraft\def\draft{\drafttrue\hoffset=.8truecm\showlabeltrue
\def\comment##1{{\bf comment: ##1}}
\headline={\sevenrm \hfill \ifx\filenamed\undefined\jobname\else\filenamed\fi%
(.tex) (as of \ifx\updated\undefined???\else\updated\fi)
 \TeX'ed at {\hour\time\divide\hour by 60{}%
\minutes\hour\multiply\minutes by 60{}%
\advance\time by -\minutes
\the\hour:\ifnum\time<10{}0\fi\the\time\  on \today\hfill}}
}
\def\inpro#1{\langle#1\rangle}
\def\ip#1{\langle\kern-.28em\langle#1\rangle\kern-.28em\rangle_\nu}
\def\cU{{\cal U}}
\def\cH{{\cal H}}
\def\norm#1{\Vert#1\Vert}
\def\openR{{{\rm I}\kern-.16em {\rm R}}}
\def\Fd{\FF^d}
\let\ga\alpha
\let\gb\beta
\let\gs\sigma
\let\go\omega
\let\gO\Omega
\let\ga\alpha
\let\gb\beta
\let\gs\sigma
\def\inpro#1{\langle#1\rangle}
\def\Hom{\mathop{\rm Hom}\nolimits}
\def\formeq{\the\sectionno.\the\equationno}  
\def\elabel#1/#2/#3/{\global\advance\equationno by 1 %
\ifx#1\empty\else\emember#1%
\ifshowlabel\marginal{\string#1}\fi\fi%
\ifmmode\eqno{#3(\formeq#2)}\else#3\formeq#2\fi} 
\def\makeblanksquare#1#2{
\dimen0=#1pt\advance\dimen0 by -#2pt
      \vrule height#1pt width#2pt depth0pt\kern-#2pt
      \vrule height#1pt width#1pt depth-\dimen0 \kern-#1pt
      \vrule height#2pt width#1pt depth0pt \kern-#2pt
      \vrule height#1pt width#2pt depth0pt
}
\title{\bf 
Constructing high order spherical designs as a union of 
two of lower order
}
\author{
Mozhgan Mohammadpour, Shayne Waldron\\
 \\
Department of Mathematics \\ University of Auckland\\
Private
Bag 92019, Auckland, New Zealand\\
e-mail: waldron@math.auckland.ac.nz}
\date{\today}
\begin{document}

\maketitle 

\begin{abstract}
We show how the variational characterisation of spherical designs 
can be used to take a union of spherical designs to obtain a spherical 
design of higher order (degree, precision, exactness) with a small
number of points.
The examples that we consider involve
taking the orbits of two vectors under the 
action of a complex reflection group to obtain 
a weighted spherical $(t,t)$-design. 
These designs have a high degree of symmetry
(compared to the number of points), and many are 
the first known construction of such a design,
e.g.,
a $32$ point $(9,9)$-design for $\CC^2$,
a $48$ point $(4,4)$-design for $\CC^3$, 
and a $400$ point $(5,5)$-design for $\CC^4$.
From a real reflection group, we construct a $360$ point $(9,9)$-design
for $\RR^4$ (spherical half-design of order $18$), 
i.e., a $720$ point spherical $19$-design for $\RR^4$.
\end{abstract}

\bigskip
\vfill

\noindent {\bf Key Words:}
complex spherical design,
harmonic Molien-Poincar\'e series,
spherical $t$-designs,
spherical half-designs,
tight spherical designs,
finite tight frames,
signed frame,
integration rules,
cubature rules,
cubature rules for the sphere,

\bigskip
\noindent {\bf AMS (MOS) Subject Classifications:} 
primary
05B30, \ifdraft (Other designs, configurations) \else\fi
42C15, \ifdraft General harmonic expansions, frames  \else\fi
65D30; \ifdraft (Numerical integration) \else\fi
\quad
secondary
94A12. \ifdraft (Signal theory [characterization, reconstruction, etc.]) \else\fi

\vskip .5 truecm
\hrule
\newpage

\section{Introduction}

Let $\SS$ be the unit sphere in $\Rd$ or $\Cd$, and $\gs$ be 
normalised surface area measure on $\SS$.
A {\bf weighted spherical design} is a finite set (or sequence) of points $X$ in $\SS$ 
and {\bf weights} $w_x\in\RR$, $x\in X$,
for which the integration (cubature) rule
\begin{equation}
\label{P-design-def}
\int_\SS f \,d\gs = \sum_{x\in X} w_x f(x), \qquad \forall f\in P,
\end{equation}
holds for some finite dimensional space of functions $P$ defined on $\SS$
(usually a unitarily invariant polynomial space).
Such configurations of points are known to exist for every choice of $P$
(see \cite{BT06}, \cite{SZ84}).
For 
certain choices there is great interest in explicit constructions,
especially those with a minimal number of points, e.g., the ``tight spherical
designs'' of algebraic combinatorics \cite{BB09}.
The optimal configurations often have a high degree of symmetry, and are
closely related to optimal spherical packings \cite{MP19}, \cite{JKM19},
\cite{V17},
and points minimising a potential function on the sphere
\cite{BGMPV19}.


If $X$ and $Y$ are weighted spherical designs with weights $(w_x^X)$ and
$(w_y^Y)$, then for any fixed $\ga\in\RR$ and $f\in P$, we have
$$ \sum_{x\in X} (\ga w_x^X) f(x)
+\sum_{y\in Y} ((1-\ga) w_y^Y) f(y)
= \ga \int_\SS f \,d\gs + (1-\ga)\int_\SS f \,d\gs 
= \int_\SS f \,d\gs, $$
so that $X\cup Y$ is a weighted spherical design, 
with the ``affine combination'' of the 
weights 
\begin{equation}
\label{wXYdefn}
w_a^{X\cup Y,(\ga,1-\ga)} :=
\begin{cases}
\ga w_x^X, & a=x\in X; \cr
(1-\ga) w_y^Y, & a=y\in Y.
\end{cases}
\end{equation}
The weights of a spherical design are usually taken to be
positive, and so it would be natural to take a ``convex combination''
of the weights, i.e., to choose $0<\ga,1-\ga<1$.
We will call $(\gb_X,\gb_Y)=(\ga,1-\ga)$ the {\bf weighting} of
the union $X\cup Y$. It is usually assumed the weights add to $1$
(this follows if $P$ contains the constants), in which case
$$ \gb_X=\sum_{x\in X} w_x^{X\cup Y,(\ga,1-\ga)}, \qquad
\gb_Y=\sum_{y\in Y} w_y^{X\cup Y,(\ga,1-\ga)}, \qquad \gb_X+\gb_Y=1. $$

The purpose of this paper is to try and choose the weighting of a union of 
spherical designs to obtain one of {\em higher order}, i.e., for 
which the space $P$ in (\ref{P-design-def}) is enlarged.
If one were to try and use (\ref{P-design-def}) to do this, then 
one could increase 
$P$ by just one dimension, by solving an appropriate linear equation for $\ga$.

When $P$ is a unitarily invariant space of polynomials,
(\ref{P-design-def}) can be
replaced by a single {\it quadratic equation} in the weights $(w_x)_{x\in X}$
with coefficients involving just (the inner products between) the points $X$,
which comes from a variational characterisation \cite{W19}. By considering
this quadratic for the union of designs $X$ and $Y$, and a 
unitarily invariant space $Q$, 
it follows that:

\begin{lemma}
\label{quadraticexistlemma}
There is a 
quadratic equation in $\ga=\gb_X$, which 
if solvable,
 gives a weighting for
the union of spherical designs $X$ and $Y$ for $P$ to be one for 
a larger space $Q$.
\end{lemma}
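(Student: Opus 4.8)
The plan is to make the reduction to a quadratic explicit using the variational characterisation of \cite{W19} quoted above. That characterisation says: for a unitarily invariant polynomial space $Q$, a weighted configuration $(Z,(w_a)_{a\in Z})$ in $\SS$ is a weighted spherical design for $Q$ if and only if its weights satisfy the single quadratic equation
\[
\sum_{a,b\in Z} w_a\, w_b\, K(a,b) = c_Q ,
\]
where the kernel $K(a,b)$ depends only on the inner product $\inpro{a,b}$ (so that $K(a,b)=K(b,a)$) and $c_Q=\int_\SS\!\int_\SS K(a,b)\,d\gs(a)\,d\gs(b)$ is the value forced by the uniform measure. I would take $Z=X\cup Y$ and substitute the affine-combination weights $w_a^{X\cup Y,(\ga,1-\ga)}$ of (\ref{wXYdefn}).

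Since each such weight is a first-degree function of $\ga$, the double sum becomes a quadratic polynomial in $\ga$. First I would split the sum over $Z\times Z$ into the three blocks $X\times X$, $Y\times Y$, and the two cross blocks (which are equal by symmetry of $K$), setting
\[
A=\sum_{x,x'\in X} w_x^X w_{x'}^X K(x,x'),\qquad
B=\sum_{x\in X,\,y\in Y} w_x^X w_y^Y K(x,y),\qquad
D=\sum_{y,y'\in Y} w_y^Y w_{y'}^Y K(y,y').
\]
The variational identity for $Q$ then reads $\ga^2 A+2\ga(1-\ga)B+(1-\ga)^2 D=c_Q$, i.e.
\[
(A-2B+D)\,\ga^2 + 2(B-D)\,\ga + (D-c_Q)=0 ,
\]
which is the asserted quadratic in $\ga=\gb_X$; its coefficients involve only the inner products among the points of $X$ and $Y$ together with the given weights, exactly as claimed.

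Next I would check that no hidden constraint survives. Because $X$ and $Y$ have weights summing to $1$, the affine-combination weights sum to $\ga+(1-\ga)=1$ for every $\ga$, so the normalisation built into (\ref{P-design-def}) when $Q$ contains the constants holds automatically and the one quadratic above is the only condition. Moreover, the affine-combination argument preceding (\ref{wXYdefn}) already shows that $X\cup Y$ is a design for the smaller space $P$ for \emph{every} $\ga$, so enlarging $P$ to $Q$ costs precisely this single equation. Hence at any real root $\ga$ of the quadratic (lying in $(0,1)$ when positive weights are required), the union $X\cup Y$ with weighting $(\ga,1-\ga)$ is a weighted spherical design for $Q$.

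The substantive content is this reduction, not solvability: the lemma claims a weighting only \emph{when} the quadratic can be solved. The main obstacle in applications is therefore geometric, namely whether the discriminant $4(B-D)^2-4(A-2B+D)(D-c_Q)$ is non-negative and yields an admissible root; this depends on the relative configuration of $X$ and $Y$ and is exactly what must be verified for each candidate pair of designs (the orbits of two vectors) in the constructions that follow.
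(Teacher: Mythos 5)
Your proposal is correct and follows essentially the same route as the paper: you invoke the variational characterisation of \cite{W19} as a single quadratic equation in the weights, substitute the affine-combination weights of (\ref{wXYdefn}) (linear in $\ga$), and expand the double sum over the blocks $X\times X$, $X\times Y$, $Y\times Y$ to obtain a quadratic in $\ga$, which is exactly how the paper argues, with your block expansion matching its explicit instance (\ref{exampleofquadratic}) for $\Hom(t,t)$. Your version merely states the kernel $K(a,b)$ and constant $c_Q$ abstractly rather than specialising to $|\inpro{a,b}|^{2t}$ and $c_t(\Cd)$, which is a harmless generalisation, not a different method.
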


This is useful only if one can choose $X$, $Y$
and $Q$ (large enough to be of interest), 
so that the quadratic equation has a real root, preferably with $0<\ga<1$.
Remarkably, we show that this approach actually works quite successfully.
We will primarily consider the class of (complex) spherical $(t,t)$-designs.
The basic properties in the milieu are:

\begin{itemize}
\item $X$ and $Y$ are chosen to have a small number of points. 
In practice, this means that they are an orbit of a unitary 
group action, with a large stabiliser.
\item $X$ and $Y$ must have the right relationship.
Clearly, 
we cannot take $Y=X$ and gain anything more. One could take
$Y=UX$ with $U$ unitary, but this adds additional parameters
to the quadratic (making it more likely to find one which is solvable, 
but less tractable). Here we take $X$ and $Y$ to be orbits of the same
group action.
\item $Q$ must be large enough to be of interest, but not so large
that the quadratic has no real roots. In practice, $P$ is polynomials
up to some degree, and we take $Q$ to be the same space for
polynomials one degree larger.
\end{itemize}
Our constructions for orbits of finite complex 
reflection groups are summarised in \S \ref{summarysect}.

\section{Spherical $(t,t)$-designs and half-designs of order $2t$}

For $t=1,2,\ldots$, 
every finite set of vectors $X$ in $\Cd$ satisfies the inequality
\begin{equation}
\label{Welchbound}
\sum_{x\in X}\sum_{y\in X} |\inpro{x,y}|^{2t} 
\ge c_t(\Cd) \Bigl(\sum_{x\in X}\norm{x}^{2t}\Bigr)^2,
\qquad c_t(\Cd):={1\over{t+d-1\choose t}}.
\end{equation}
A set of nonzero vectors giving equality in (\ref{Welchbound}) is
called a {\bf spherical $(t,t)$-design}.
A spherical $(t,t)$-design $X$ is a weighted spherical design for
the complex sphere \cite{W17},
where $x\in X$ corresponds to $\hat x:={x\over\norm{x}}\in\SS$,
 and the weights and polynomial space are
\begin{equation}
\label{weightsandP}
w_x = {\norm{x}^{2t}\over\sum_{a\in X}\norm{a}^{2t}}, 
\qquad
P=\Hom(t,t). 
\end{equation}
Here $\Hom(p,q)$ is the space of homogeneous polynomials in the variables $z\in\Cd$ and
$\overline{z}$ which are of degree $p$ in $z$ and degree $q$ in $\overline{z}$.
The variational characterisation is
\begin{equation}
\label{varchar}
\sum_{x\in X}\sum_{y\in X} w_x w_y |\inpro{\hat x,\hat y}|^{2t} = c_t(\Cd).
\end{equation}
From (\ref{Welchbound}), it follows that a spherical $(t,t)$-design is a 
projective object, i.e., multiplying a point $x\in X$ by a unit scalar
gives another such design, and so $x$ can be identified with the 
complex line through $x$ and the origin. When a spherical $(t,t)$-design
is viewed as a collection of lines, then the term 
{\it weighted complex projective $t$-design} is also used \cite{RS07}. 
Notable examples include {\it tight frames} which are the $(1,1)$-designs
\cite{W18} (those with the minimal number being the orthogonal bases), 
and {\it SICs} (sets of $d^2$ equiangular lines in $\Cd$) 
which are $(2,2)$-designs with the minimal number of vectors
\cite{ACFW18}.

It is not 
obvious from the definition that unions
of spherical $(t,t)$-designs are again spherical $(t,t)$-designs. 
This follows from the spherical design property (\ref{P-design-def}).

\begin{theorem}
\label{unionsof(t,t)-designs}
If $X$ and $Y$ are are spherical $(t,t)$-designs,
then so is any convex union of them, such as $X\cup Y$, and in
particular
\begin{equation}
\label{suprisingequality}
\sum_{x\in X}\sum_{y\in Y} |\inpro{x,y}|^{2t}
= c_t(\Cd) \Bigl(\sum_{x\in X}\norm{x}^{2t}\Bigr)
\Bigl(\sum_{y\in Y}\norm{y}^{2t}\Bigr).
\end{equation}
\end{theorem}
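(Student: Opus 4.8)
The plan is to work throughout in the language of weighted spherical designs for $P=\Hom(t,t)$, using the equivalence recorded in (\ref{weightsandP})--(\ref{varchar}): by \cite{W17}, a finite set of nonzero vectors is a spherical $(t,t)$-design precisely when it is a weighted spherical design for $\Hom(t,t)$ with the canonical weights $w_x=\norm{x}^{2t}/\sum_{a}\norm{a}^{2t}$. The observation that drives everything is that for a fixed vector $y\in\Cd$ the function $z\mapsto|\inpro{z,y}|^{2t}=\inpro{z,y}^t\,\overline{\inpro{z,y}}^{\,t}$ is homogeneous of degree $t$ in $z$ and degree $t$ in $\overline{z}$, hence lies in $\Hom(t,t)$, and satisfies the standard moment identity $\int_\SS|\inpro{z,y}|^{2t}\,d\gs(z)=c_t(\Cd)\norm{y}^{2t}$ on the complex sphere (this is exactly the constant that appears in (\ref{Welchbound})).

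First I would prove the ``surprising equality'' (\ref{suprisingequality}) directly, since it uses only that $X$ is a design. Fix $y\in Y$ and apply the cubature rule (\ref{P-design-def}) for $X$ to the test function $f(z)=|\inpro{z,y}|^{2t}\in\Hom(t,t)$. Because $w_x^X\,|\inpro{\hat x,y}|^{2t}=|\inpro{x,y}|^{2t}/\sum_{a\in X}\norm{a}^{2t}$, this gives
\begin{equation*}
\sum_{x\in X}|\inpro{x,y}|^{2t}=c_t(\Cd)\,\norm{y}^{2t}\sum_{a\in X}\norm{a}^{2t}.
\end{equation*}
Summing over all $y\in Y$ yields (\ref{suprisingequality}); by symmetry one could instead fix $x\in X$ and sum over $Y$, using that $Y$ is a design.

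With (\ref{suprisingequality}) in hand the union statement is a short computation. Writing $A=\sum_{x\in X}\norm{x}^{2t}$ and $B=\sum_{y\in Y}\norm{y}^{2t}$, I would expand the left-hand Welch sum of (\ref{Welchbound}) for the point set $X\cup Y$ into its four blocks:
\begin{equation*}
\sum_{z,z'\in X\cup Y}|\inpro{z,z'}|^{2t}
=\sum_{x,x'\in X}|\inpro{x,x'}|^{2t}
+\sum_{y,y'\in Y}|\inpro{y,y'}|^{2t}
+2\sum_{x\in X}\sum_{y\in Y}|\inpro{x,y}|^{2t}.
\end{equation*}
Since $X$ and $Y$ are designs the first two blocks equal $c_t(\Cd)A^2$ and $c_t(\Cd)B^2$, and the cross block equals $2c_t(\Cd)AB$ by (\ref{suprisingequality}); the total is $c_t(\Cd)(A+B)^2=c_t(\Cd)\bigl(\sum_{z\in X\cup Y}\norm{z}^{2t}\bigr)^2$. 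Thus $X\cup Y$ attains equality in (\ref{Welchbound}) and is a spherical $(t,t)$-design. Note this conclusion concerns the \emph{point set}, so it holds for any convex union, not just a particular weighting.

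A second route to the union statement, more in line with the remark preceding the theorem, is to invoke the general union principle displayed before Lemma~\ref{quadraticexistlemma}: any convex combination of weights makes $X\cup Y$ a weighted spherical design for $\Hom(t,t)$, and the weighting $\ga=A/(A+B)$ turns the combined weights $\ga w_x^X=\norm{x}^{2t}/(A+B)$ and $(1-\ga)w_y^Y=\norm{y}^{2t}/(A+B)$ into the canonical $(t,t)$-weights of $X\cup Y$. I expect the only genuine obstacle to be bookkeeping rather than ideas: pinning down the moment identity $\int_\SS|\inpro{z,y}|^{2t}\,d\gs=c_t(\Cd)\norm{y}^{2t}$, and, in this second route, matching the convex-combination weights to the canonical ones while treating $X\cup Y$ as a disjoint union (so that $\sum_{X\cup Y}\norm{\cdot}^{2t}=A+B$). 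The test-function computation of the second paragraph avoids this weight-matching entirely and is the cleaner path.
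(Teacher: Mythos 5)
Your proposal is correct, but your primary route runs in the opposite direction to the paper's. The paper first shows $X\cup Y$ is a design: it observes that the canonical weights (\ref{weightsandP}) of the union are exactly the convex combination of those of $X$ and $Y$ with weighting $\gb_X=A/(A+B)$, $\gb_Y=B/(A+B)$ (in your notation), so the union principle for weighted designs from the introduction applies immediately; it then obtains (\ref{suprisingequality}) \emph{afterwards}, by writing out equality in (\ref{Welchbound}) for $X\cup Y$ as the four-block sum and cancelling against the two equalities for $X$ and $Y$, leaving the cross term. You instead prove (\ref{suprisingequality}) \emph{first}, by applying the cubature rule (\ref{P-design-def}) for $X$ to the test function $z\mapsto|\inpro{z,y}|^{2t}\in\Hom(t,t)$ together with the moment identity $\int_\SS|\inpro{z,y}|^{2t}\,d\gs=c_t(\Cd)\norm{y}^{2t}$, and then deduce the design property of the union from the four-block expansion. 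Both rest on the same cited equivalence between Welch-bound equality and the cubature rule for $\Hom(t,t)$, and your ``second route'' is precisely the paper's argument. Your ordering buys two things the paper's does not make visible: (\ref{suprisingequality}) actually needs only \emph{one} of $X$, $Y$ to be a design, and the constant $c_t(\Cd)$ is identified explicitly as a sphere moment; the paper's ordering is shorter given the union machinery already displayed before Lemma~\ref{quadraticexistlemma}, and it delivers arbitrary convex weightings directly, whereas your first route gets ``any convex union'' only via the (correct, but only tersely indicated) remark that rescaling $X\mapsto cX$ preserves the design property and realises every weighting $\ga=c^{2t}A/(c^{2t}A+B)$.
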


\begin{proof}
The union $X\cup Y$, with weights given by (\ref{weightsandP}),
is given by the weighting
$$ \gb_X = {\sum_{a\in X}\norm{a}^{2t}
\over \sum_{x\in X}\norm{x}^{2t} + \sum_{y\in Y}\norm{y}^{2t}}, \qquad
\gb_Y = {\sum_{b\in Y}\norm{b}^{2t}
\over \sum_{x\in X}\norm{x}^{2t} + \sum_{y\in Y}\norm{y}^{2t}}. $$
Eliminating terms for equality in (\ref{Welchbound})
for $X$, $Y$ and $X\cup Y$ gives (\ref{suprisingequality}).
\end{proof}

Let $X$ and $Y$ be finite subsets of $\SS$ and 
$(w_x^X)$ and $(w_y^Y)$ be corresponding weights.
By the variational characterisation (\ref{varchar}),
their union $X\cup Y$ with the weighting $(\gb_X,\gb_Y)=(\ga,1-\ga)$ is
a spherical $(t,t)$-design if and only if $\ga$ satisfies
$$ \sum_{a\in X\cup Y} \sum_{b\in X\cup Y} 
w_a^{X\cup Y,(\ga,1-\ga)} w_b^{X\cup Y,(\ga,1-\ga)}
|\inpro{a,b}|^{2t} = c_t(\Cd), $$
which, by (\ref{wXYdefn}), expands to 
the following quadratic equation in $\ga$
\begin{align}
\ga^2 \sum_{a\in X}\sum_{b\in X} w_a^X w_b^X |\inpro{a,b}|^{2t} 
+& (1-\ga)^2 \sum_{a\in Y}\sum_{b\in Y} w_a^Y w_b^Y |\inpro{a,b}|^{2t}
\nonumber \\
+&2 \ga(1-\ga) \sum_{a\in X}\sum_{b\in Y} w_a^X w_b^Y |\inpro{a,b}|^{2t} 
= c_t(\Cd). 
\label{exampleofquadratic}
\end{align}
This is an instance of Lemma \ref{quadraticexistlemma}. 
Here (and in general) the coefficients of the quadratic depend only on
the weights 
and the inner products between 
the elements of $X\cup Y$.

We find in convenient to use the {\bf normalised weights} 
$$ \hat w_x^X:=|X|w_x^X, $$ 
so that the normalised weights for $X$ add to $|X|$,
and they equal $1$ when they are all the same.
We now suppose the weights for $X$ and $Y$ are both constant
(as will be the case for an orbit under a unitary action),
so that the normalised weights for $X\cup Y$ have the form
$$ (|X|+|Y|)w_a^{X\cup Y,(\ga,1-\ga)} =: 
\begin{cases}
\hat w_X, & a\in X; \\
\hat w_Y, & a\in Y.
\end{cases} $$
Since $|X|\hat w_X +|Y|\hat w_Y = |X|+|Y|$,
  for $\hat w_X,\hat w_Y\ne0$,
it follows from (\ref{wXYdefn}) that $X\cup Y$
with the weighting given by $z=\hat w_X$ 
is a spherical $(t,t)$-design if and only if it 
\begin{align} 
z^2 \sum_{a\in X}\sum_{b\in X} |\inpro{a,b}|^{2t} 
+& \left({|X|+|Y|-|X|z\over|Y|}\right)^2 \sum_{a\in Y}\sum_{b\in Y} 
|\inpro{a,b}|^{2t} \nonumber \\
&\hskip-1.5truecm +2 z \left({|X|+|Y|-|X|z\over|Y|}\right) \sum_{a\in X}\sum_{b\in Y} 
 |\inpro{a,b}|^{2t} 
= (|X|+|Y|)^2 c_t(\Cd). 
\label{z-quadratic}
\end{align}
Once a suitable $\hat w_X$ has been found, 
the other parameters can then be calculated from
\begin{equation}
\label{otherpars}
\hat w_Y = {|X|+|Y|-|X|\hat w_X\over|Y|}, 
\qquad \gb_X = {|X|\hat w_X\over|X|+|Y|}, \quad
\gb_Y = {|Y|\hat w_Y\over|X|+|Y|}.
\end{equation}
 
\vfil\eject

For vectors $X$ in $\Rd$, 
the following sharpening of (\ref{Welchbound}) 
is possible (see \cite{W17})
\begin{equation}
\label{realWelchbound}
\sum_{x\in X}\sum_{y\in X} |\inpro{x,y}|^{2t} 
\ge c_t(\Rd) \Bigl(\sum_{x\in X}\norm{x}^{2t}\Bigr)^2,
\qquad c_t(\Rd):={1\cdot3\cdot5\cdots(2t-1)\over d(d+2)\cdots (d+2(t-1))}.
\end{equation}
The corresponding spherical designs are called {\bf spherical 
half-designs} \cite{KP11}. They integrate $P=\Hom(2t)$, the space of 
homogeneous polynomials of degree $2t$ on $\Rd$,
and are characterised by 
\begin{equation}
\label{realvarchar}
\sum_{x\in X}\sum_{y\in X} w_x w_y |\inpro{\hat x,\hat y}|^{2t} = c_t(\Rd).
\end{equation}
Our previous discussion on spherical $(t,t)$-designs extends to 
spherical half-designs in the obvious way, i.e., replace 
$c_t(\Cd)$ by $c_t(\Rd)$. We will not labour the point, with a
spherical $(t,t)$-design for $\Rd$ understood to be a spherical
half-design of order $2t$.

\section{Highly symmetric tight frames and reflection groups}

Since $\norm{\cdot}^2=\inpro{\cdot,\cdot}$ is constant on the sphere, 
the space $P=\Hom(t,t)$ in (\ref{P-design-def})
integrated by a spherical $(t,t)$-design satisfies
$$\Hom(p-1,q-1)|_\SS \subset \Hom(p,q)|_\SS.$$
Hence a spherical $(t,t)$-design is a spherical $(r,r)$-design for $r=0,1,\ldots, t$. 
In particular, its weights add to $1$ ($r=0$) and it is a tight frame ($r=1$) 
for $t\ge1$. The analogous result for 
spherical half-designs of order $2t$ follows from the fact
$\Hom(2(t-1))|_\SS\subset \Hom(2t)|_\SS$.

The following notion of a ``highly symmetric'' tight frame was 
given in \cite{BW13}.
\begin{itemize}
\item[] 
A finite frame of distinct vectors is {\bf highly symmetric} if the 
action of its symmetry group is irreducible, transitive, and the 
stabiliser of any one vector (and hence all) is a nontrivial subgroup
which fixes a space of dimension exactly one.
\end{itemize}
The upshot of this definition, is that for every unitary 
irreducible representation
of a finite group on $\Rd$ or $\Cd$, there is a finite (possibly empty) set of highly symmetric tight frames
(up to unitary equivalence) given as a group orbit, which has a 
nontrivial stabiliser (the number of vectors is less than the order 
of the group). In theory, these highly symmetric tight frames can be 
calculated for a given group (or representation), and this was done
primarily in the case of finite complex reflection groups in \cite{BW13}.

A finite group of linear transformations on $\Rd$ or $\Cd$ is 
a {\bf complex reflection group} if it is generated by 
complex reflections, i.e., transformations which fix a hyperplane
(and have finite order).
The finite irreducible complex reflection groups were classified by 
Shephard and Todd (see \cite{ST54}, \cite{LT09}). There are three
infinite families of imprimitive reflection groups 
of the type $G(m,p,n)$, $p\vert m$,
and $31$ primitive complex reflection groups
$G_4,\ldots,G_{34}$
 in dimensions $2,3,\ldots,8$, which are referred to as the
{\it Shephard-Todd groups} 
 with numbers $4,5,\ldots,34$. 
The complex reflection groups are a generalisation of 
the real reflection groups (classified by Coxeter). 
The Shephard-Todd classification
contains the real reflection groups (numbers $23$, $28$, $30$, $35$, $36$ and $37$). 
In many presentations, the
generators of the real reflection groups are given as matrices over a cyclotomic field.

The highly symmetric tight frames 
for the Shephard-Todd groups were calculated in \cite{BW13}. 
Their strength as $(t,t)$-designs (the largest $t$ can be)
was calculated in \cite{HW18} 
by utilising 
magma software  
of Don Taylor 
to calculate the {\it maximal parabolic subgroups} 
(which stablise the vectors of a highly symmetric tight frame).
Later, it was shown that in most, but not all cases,
the strength of such a design was shared by all orbits
(where the action is unitary),
and that it could be calculated from a  complex 
harmonic Molien-Poincar\'e series
\cite{RS14},
\cite{MW19}. The corresponding results for orthogonal actions on real spaces
were considered earlier by \cite{B79}, \cite{HP04}.
In both the real and complex cases, 
we will call this the {\it generic} strength of
an orbit.

\begin{example} If the unitary action of a finite group on 
$\Fd=\Rd,\Cd$ is irreducible, i.e., every orbit of every
nonzero vector spans $\Fd$, then every orbit of a nonzero 
vector is a tight frame, i.e., a $(1,1)$-design (this is 
equivalent to the action being irreducible). Hence the
generic strength of an orbit of an irreducible complex reflection 
group is at least $t=1$.
\end{example}

Our main result is the proof of concept: 
$$ \hbox{\it The quadratic (\ref{z-quadratic}) can be solved
to find a union of spherical designs with higher order.} $$
A summary of our calculations for the highly symmetric
spherical $(t,t)$-designs for the complex reflection groups
is given in Section \ref{summarysect}. 
Combining these 
gives the following:

\begin{theorem} Let $G$ be a primitive irreducible complex
reflection group (these have Shephard-Todd numbers 4--34). 
If $X$ and $Y$ are different highly symmetric tight frames for $G$,
then there is unique rational weighting for which $X\cup Y$
is a spherical $(t,t)$-design, where $t$ is strictly larger
than that of a generic orbit.
Moreover, for every case where there are two or more highly
symmetric tight frames, a pair can be chosen for which the
weighting is convex, i.e., has positive entries. 
\end{theorem}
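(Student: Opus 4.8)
The plan is to analyse the quadratic (\ref{z-quadratic}) --- equivalently (\ref{exampleofquadratic}) in $\ga=\gb_X$ --- at the target degree $t=t_0+1$, where $t_0$ denotes the generic strength of an orbit of $G$. Writing $a:=S_{XX}-c_t(\Cd)$, $b:=S_{YY}-c_t(\Cd)$, $c:=S_{XY}-c_t(\Cd)$ for the normalised (uniform-weight) double sums $S_{XX}=\frac{1}{|X|^2}\sum_{x,x'\in X}|\inpro{x,x'}|^{2t}$, and so on, the identity $\ga^2+(1-\ga)^2+2\ga(1-\ga)=1$ recasts the design condition as
$$ Q(\ga):=a\,\ga^2+b\,(1-\ga)^2+2c\,\ga(1-\ga)=0 .$$
First I would record that for every $r\le t_0$ Theorem \ref{unionsof(t,t)-designs}, in particular the surprising equality (\ref{suprisingequality}), forces $S_{XX}=S_{YY}=S_{XY}=c_r(\Cd)$, so $Q\equiv0$ degenerates; the first degree at which $Q$ is a genuine quadratic, and hence the relevant target, is $t=t_0+1$.

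Next I would pass to a harmonic reformulation that exposes the coefficients. Embedding each unit vector $v\mapsto\Phi(v)$ into $\Hom(t,t)$ so that $|\inpro{u,v}|^{2t}=\inpro{\Phi(u),\Phi(v)}$, the ``isotropic'' direction (the constant) splits off a fixed multiple, giving $\sum_{z\in Z}\Phi(z)=|Z|\sqrt{c_t(\Cd)}\,e_0+p_Z$ with $p_Z$ in the harmonic complement. Setting $\hat p_X:=p_X/|X|$ and $\hat p_Y:=p_Y/|Y|$ yields the clean dictionary $a=\norm{\hat p_X}^2$, $b=\norm{\hat p_Y}^2$, $c=\inpro{\hat p_X,\hat p_Y}$. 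Then the discriminant of $Q$ is $4(c^2-ab)\le0$ by Cauchy--Schwarz, while its leading coefficient is $a+b-2c=\norm{\hat p_X-\hat p_Y}^2\ge0$. Consequently a real root exists \emph{iff} $\hat p_X$ and $\hat p_Y$ are parallel, in which case it is the unique double root $\ga^\ast=1/(1-\rho)$, with $\rho:=c/b$; this already delivers uniqueness. Rationality follows because $c_t(\Cd)$ is rational and the orbit double sums $\sum|\inpro{\cdot,\cdot}|^{2t}$ are Galois-invariant sums of cyclotomic numbers, hence rational, so $\rho\in\Q$ and thence $\ga^\ast\in\Q$.

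The decisive step is to prove the required parallelism. Since $X,Y$ are $G$-orbits and $\Phi$ is $G$-equivariant, $p_X$ and $p_Y$ lie in the space of $G$-invariant harmonics of bidegree $(t_0+1,t_0+1)$. The defining feature of the generic strength (via the harmonic Molien--Poincar\'e series of \cite{MW19}) is that this invariant space first becomes nonzero exactly at bidegree $(t_0+1,t_0+1)$; the point I must verify, reading off the tabulated Molien series in Section \ref{summarysect}, is that for each primitive irreducible complex reflection group it is precisely \emph{one}-dimensional. Granting this, $\hat p_X$ and $\hat p_Y$ are automatically parallel, the double root exists, and $X\cup Y$ is a spherical $(t_0+1,t_0+1)$-design with $t_0+1>t_0$. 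I expect this one-dimensionality --- together with the nondegeneracy $\rho\notin\{0,1\}$, which excludes the trivial cases where the union collapses onto a single orbit --- to be the main obstacle, handled case by case through the explicit computations summarised in Section \ref{summarysect}.

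Finally, for convexity I would observe that $\ga^\ast=1/(1-\rho)$ lies in $(0,1)$ precisely when $\rho<0$, i.e.\ when $c=\inpro{\hat p_X,\hat p_Y}<0$, meaning $\hat p_X$ and $\hat p_Y$ point in opposite directions within the one-dimensional invariant space. Writing $\hat p_{X_i}=\rho_i\,e$ for each highly symmetric tight frame $X_i$ and a fixed unit invariant $e$, a convex pair exists iff some $\rho_i,\rho_j$ have opposite signs. The ``Moreover'' clause thus reduces to confirming, from the same tabulated data, that whenever $G$ admits two or more such frames an antiparallel pair can be found; this is exactly the content of combining the case-by-case calculations of Section \ref{summarysect}, after which (\ref{otherpars}) converts $\ga^\ast$ into the asserted positive weighting.
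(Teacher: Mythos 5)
Your proposal takes a genuinely different route from the paper. The paper's ``proof'' of this theorem is purely computational: it consists of solving the quadratic (\ref{z-quadratic}) in magma for every pair of highly symmetric tight frames of every primitive group and recording the outcomes in the tables of Section \ref{summarysect} (``Combining these gives the following''); the double-root phenomenon is only \emph{observed} empirically in Section \ref{quadstructsect} (``It seems that in the many cases considered so far\dots the discriminant is zero''). Your argument actually explains this: writing the orbit averages as $\sqrt{c_t(\Cd)}\,e_0+\hat p_X$, etc., your dictionary $a=\norm{\hat p_X}^2$, $b=\norm{\hat p_Y}^2$, $c=\inpro{\hat p_X,\hat p_Y}$ is correct (one checks $\norm{\int\Phi\,d\gs}^2=c_t(\Cd)$, so the constant component of $\Phi(v)$ has length $\sqrt{c_t(\Cd)}$), the discriminant is $4(c^2-ab)\le0$ by Cauchy--Schwarz, the leading coefficient is $\norm{\hat p_X-\hat p_Y}^2$, and hence a real root exists iff $\hat p_X\parallel\hat p_Y$, in which case it is a double root --- this derives the paper's empirical identity (\ref{discrimzero}) rather than assuming it. Your reduction of parallelism to $\dim H(t_0+1,t_0+1)^G=1$ is also essentially equivalent to the polynomial identity (\ref{pairsgivedesigns}) holding nontrivially, since $x\mapsto \hat p_x$ sweeps out that invariant space. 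What your approach buys is a uniform structural explanation and a drastically smaller verification burden; what the paper's approach buys is the actual data (weights, signs, exact strengths), including cases your reduction cannot reach.

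There are, however, concrete gaps. First, the decisive one-dimensionality claim cannot be ``read off the tabulated Molien series in Section \ref{summarysect}'' --- no Molien series are tabulated there, and the paper explicitly states that its Section \ref{quadstructsect} verification of (\ref{pairsgivedesigns}) does \emph{not} extend to all the groups of the theorem (the sum in (\ref{pGtdef}) runs over all of $G$, infeasible for, e.g., $G_{34}$ of order $39191040$). So this step requires an independent harmonic Molien--Poincar\'e computation for each of ST 4--34; plausible via \cite{MW19}, but not supplied. Second, parallelism alone does not give a root: if $\hat p_X=\hat p_Y\ne0$ then $Q$ is a nonzero constant and no weighting exists, so you must additionally rule out $\rho=1$ for each pair of distinct frames --- and the tables show the boundary cases are real: $\rho=0$ (weighting $(1,0)$) occurs several times, and the ST 34 pair of two $9072$-point frames admits \emph{arbitrary} weightings $(\gb,1-\gb)$, an edge case that strict uniqueness must exclude (both orbits already have the higher strength). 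Third, your rationality argument is too quick: Galois conjugation of cyclotomic matrix entries can carry the representation to an inequivalent one, so the double sums are not automatically Galois-fixed; in the paper rationality is again only a computational observation. Finally, the ``Moreover'' clause remains irreducibly case-by-case in both treatments (it amounts to finding oppositely signed $\rho_i$), so your proof, like the paper's, ultimately bottoms out in finite computation --- though notably less of it, and with the double-root structure proved rather than observed.
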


In Section \ref{quadstructsect},
we give evidence to suggest that such a result also holds 
for {\it any} pair of orbits, i.e., the fact that the orbit
is highly symmetric is important only in that its size is small.

We finish this section with some technical comments about our calculations.


\begin{itemize}
\item Our calculations were done in magma, using the
software {\tt Complements.m} of Don Taylor to calculate 
the maximal parabolic subgroups. 
Magma writes vectors as rows, and the action of
a matrix group, e.g., in {\tt Eigenspace}, is by right multiplication,
and so
our code must be read with this in mind.
\item For an orbit of a unit vector to lie on the sphere, the
group action must be unitary. The presentations
of the complex reflection groups (or more generally irreducible
representations)
provided in magma are not all unitary. One way around this, 
is to consider the canonical Gramian (which can be
calculated from the Gramian) of the orbit of the 
nonunitary representation \cite{W18}. This can be done, but becomes 
unfeasible eventually. Another way, is to find a 
Hermitian matrix which gives the quadratic form under which
the action is unitary (as was done in \cite{BW13}). 
This works better for large examples,
as the inner products in sums such as 
(\ref{z-quadratic})
can be created and added to the sum
one by one. Thus for orbits of large size there is no need to
create the Gramian.
\end{itemize}

\section{The structure of the quadratic}
\label{quadstructsect}

For weighted sets $X$ and $Y$ of points on the sphere, let
$$ b_{XY}^{(t)} := \sum_{a\in X}\sum_{b\in Y} w_a^X w_b^Y
|\inpro{a,b}|^{2t}. $$
If $X$ and $Y$ are spherical $(t,t)$-designs for $\Fd$, 
then by Theorem \ref{unionsof(t,t)-designs}, we have
$$ b_{XY}^{(t)} = c_t(\Fd), $$
so that 
$$ b_{XX}^{(t)}b_{YY}^{(t)}- (b_{XY}^{(t)})^2\ne0, \qquad
b_{XX}^{(t)}+b_{YY}^{(t)}-2b_{XY}^{(t)}\ne0, $$ 
when $X$ and $Y$
are not both spherical $(t,t)$-designs.

It seems that in the many cases considered so far, when there
is a root of (\ref{exampleofquadratic})
for a union of lower order designs,
then the root is a double root, i.e., the discriminant is zero
\begin{equation}
\label{discrimzero}
b_{XX}^{(t)}b_{YY}^{(t)}- (b_{XY}^{(t)})^2
=c_t(\Cd) \left(b_{XX}^{(t)}+b_{YY}^{(t)}-2b_{XY}^{(t)}\right),
\end{equation}
and we have the simple formula
$$ \gb_X 
= {b_{YY}^{(t)}-b_{XY}^{(t)}\over b_{XX}^{(t)}+b_{YY}^{(t)}-2b_{XY}^{(t)} },
\qquad \gb_Y 
= {b_{XX}^{(t)}-b_{XY}^{(t)}\over b_{XX}^{(t)}+b_{YY}^{(t)}-2b_{XY}^{(t)} }.
$$
This seems to hold for any pair of orbits, i.e., it has nothing to do with
it being a highly symmetric tight frame. 
Suppose that there is a unitary action of $G$ on $\Fd$, and let 
\begin{equation}
\label{pGtdef}
p_G^{(t)}(x,y):={1\over|G|}\sum_{g\in G} |\inpro{x,gy}|^{2t}
={1\over|G|^2}\sum_{g\in G}\sum_{h\in G} |\inpro{gx,hy}|^{2t}
= b_{Gx,Gy}^{(t)},
\end{equation}
where $Gx:=(gx)_{g\in G}$.
Then the condition for there to be a unique weighting for which 
the union of the orbits of $x$ and $y$ is
a spherical $(t,t)$-design is that
$$ 
p_G^{(t)}(\hat x,\hat x)p_G^{(t)}(\hat y,\hat y)
-\bigl(p_G^{(t)}(\hat x,\hat y)\bigr)^2\ne0,
\qquad
 \hbox{(the orbits are not both $(t,t)$-designs)} $$
where $\hat x:={x\over\norm{x}}$, and
$$ p_G^{(t)}(\hat x,\hat x)p_G^{(t)}(\hat y,\hat y)
-\bigl(p_G^{(t)}(\hat x,\hat y)\bigr)^2
= c_t(\Fd)\bigl( p_G^{(t)}(\hat x,\hat x) +p_G^{(t)}(\hat y,\hat y)
-2p_G^{(t)}(\hat x,\hat y)\bigr). $$
This condition can be written in terms of polynomials:

\begin{theorem}
\label{two-orbits-thm}
 (Two orbits) Let $G$ be a finite group with a unitary action on
$\Fd=\Rd,\Cd$. Then every generic pair of orbits has a unique
weighting which is a spherical $(t,t)$-design if and only if
the polynomial $f_G^{(t)}=f_{G,\FF}^{(t)} :\Fd\times\Fd\to\FF$ given by
\begin{equation}
\label{fgtdefn}
f_G^{(t)}(x,y)
:=p_G^{(t)}(x,x)p_G^{(t)}(y,y) -(p_G^{(t)}(x,y))^2
\end{equation}
is not identically zero, and
\begin{equation}
\label{pairsgivedesigns}
f_G^{(t)}(x,y)
= c_t(\Fd)\left( \norm{y}^{4t}p_G^{(t)}(x,x) +\norm{x}^{4t}p_G^{(t)}(y,y)
-2\norm{x}^{2t}\norm{y}^{2t}p_G^{(t)}(x,y)\right).
\end{equation}
where $p_G^{(t)}$ is given by (\ref{pGtdef}).
\end{theorem}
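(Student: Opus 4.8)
The plan is to recognise the two displayed conditions as the bihomogenised form of the unit-vector criterion already isolated in the discussion preceding the theorem, and to supply the discriminant/uniqueness bookkeeping together with the treatment of degenerate orbit pairs.

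First I would record the scaling property of $p_G^{(t)}$: since $|\inpro{\lambda x, g(\mu y)}|^{2t} = |\lambda|^{2t}|\mu|^{2t}|\inpro{x,gy}|^{2t}$, the definition (\ref{pGtdef}) gives $p_G^{(t)}(\lambda x,\mu y) = |\lambda|^{2t}|\mu|^{2t}p_G^{(t)}(x,y)$. Writing $\hat x = x/\norm x$, $\hat y = y/\norm y$ and abbreviating $A = p_G^{(t)}(\hat x,\hat x)$, $B = p_G^{(t)}(\hat y,\hat y)$, $C = p_G^{(t)}(\hat x,\hat y)$, $c = c_t(\Fd)$, this yields $A = \norm x^{-4t}p_G^{(t)}(x,x)$, $C = \norm x^{-2t}\norm y^{-2t}p_G^{(t)}(x,y)$, and similarly for $B$. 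Specialising the union quadratic (\ref{exampleofquadratic}) to the equally weighted orbits $X = Gx$, $Y = Gy$ turns it into $\ga^2(A+B-2C) + 2\ga(C-B) + (B-c) = 0$ in the weighting $\ga = \gb_X$, whose discriminant equals $4\bigl(c(A+B-2C)-(AB-C^2)\bigr)$. Hence the union is a $(t,t)$-design for a unique weighting exactly when this quadratic has a double root, i.e. $A+B-2C\ne0$ and $AB-C^2 = c(A+B-2C)$, which is the unit-vector relation (\ref{discrimzero}). Multiplying by $\norm x^{4t}\norm y^{4t}$ and using the scaling identities converts $AB-C^2 = c(A+B-2C)$ into (\ref{pairsgivedesigns}) and $AB-C^2\ne0$ into $f_G^{(t)}(x,y)\ne0$; since both sides of (\ref{pairsgivedesigns}) are bihomogeneous of bidegree $(4t,4t)$ in $(x,y)$, validity for all unit vectors is equivalent to validity identically, and non-vanishing for a generic pair is equivalent to $f_G^{(t)}\not\equiv0$.

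For the forward implication I would assume $f_G^{(t)}\not\equiv0$ and (\ref{pairsgivedesigns}). A generic pair has $f_G^{(t)}(x,y)\ne0$, so $AB-C^2\ne0$; then (\ref{pairsgivedesigns}) gives $c(A+B-2C) = AB-C^2\ne0$, forcing $A+B-2C\ne0$ and a vanishing discriminant, hence a genuine quadratic with a double root and a unique weighting. For the converse I would rule out $f_G^{(t)}\equiv0$ using the frame-operator form $p_G^{(t)}(x,y) = \inpro{Q_x,Q_y}$, the Hilbert--Schmidt inner product of the positive semidefinite operators $Q_x := \frac1{|G|}\sum_{g\in G}(gx)^{\tensor t}\bigl((gx)^{\tensor t}\bigr)^*$. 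This makes $AB-C^2 = \norm{Q_{\hat x}}^2\norm{Q_{\hat y}}^2 - \inpro{Q_{\hat x},Q_{\hat y}}^2\ge0$ and $A+B-2C = \norm{Q_{\hat x}-Q_{\hat y}}^2\ge0$, while $\trace Q_{\hat x} = 1$ for every unit $\hat x$. If $f_G^{(t)}\equiv0$ then $Q_{\hat x}$ and $Q_{\hat y}$ are always proportional, and the common unit trace forces $Q_{\hat x} = Q_{\hat y}$; the design quadratic then collapses to the $\ga$-independent constant $\norm{Q_{\hat x}}^2 - c$, so no pair has a unique weighting. Thus a generic unique weighting gives $f_G^{(t)}\not\equiv0$, the generic pair is a genuine quadratic, uniqueness forces the discriminant to vanish on a dense set, and by bihomogeneity (\ref{discrimzero}) holds identically, which is (\ref{pairsgivedesigns}).

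The discriminant computation and the dehomogenisation are routine. The main obstacle is the degenerate case in the converse: one must exclude a generic unique weighting coming from the quadratic degenerating (the leading coefficient $A+B-2C$ vanishing), and the frame-operator positivity is exactly what closes this, since $A+B-2C\equiv0$ forces $Q$ to be constant and hence $f_G^{(t)}\equiv0$. A secondary point to handle carefully is the passage between ``generic pair of orbits'' and ``holds identically as a polynomial in $x,\overline x,y,\overline y$'', which rests on the bihomogeneity of $p_G^{(t)}$ and the real Zariski-density of the unit sphere.
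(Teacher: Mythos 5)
Your proposal is correct, and at its core it performs the same move as the paper: the paper's entire proof of Theorem \ref{two-orbits-thm} is the one-line instruction to rewrite the unit-vector conditions displayed just before the theorem using $p_G^{(t)}(\hat x,\hat y)=\norm{x}^{-2t}\norm{y}^{-2t}p_G^{(t)}(x,y)$ and multiply through by $\norm{x}^{4t}\norm{y}^{4t}$, which is exactly your bihomogeneity/dehomogenisation step. What you supply beyond the paper is the justification of those unit-vector conditions themselves: the explicit specialisation of (\ref{exampleofquadratic}) to $\ga^2(A+B-2C)+2\ga(C-B)+(B-c)=0$ with discriminant $4\bigl(c(A+B-2C)-(AB-C^2)\bigr)$ (both computations check out; the paper records the corresponding identity (\ref{discrimzero}) only as an empirical observation), and, more substantially, the moment-operator lemma $p_G^{(t)}(x,y)=\inpro{Q_x,Q_y}$ with $Q_{\hat x}$ positive semidefinite of unit trace, which appears nowhere in the paper. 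That lemma buys you real content: by Cauchy--Schwarz, $AB-C^2\ge0$ and $A+B-2C=\norm{Q_{\hat x}-Q_{\hat y}}^2$, so a generic pair automatically gives a genuine quadratic rather than a degenerate linear equation, and $f_G^{(t)}\equiv0$ forces $Q$ to be constant on the sphere, collapsing the quadratic to an $\ga$-independent constant so that no pair has a unique weighting --- precisely the degenerate cases the paper's terse statement leaves unaddressed (as literally stated, the ``only if'' direction is vacuous when $f_G^{(t)}\equiv0$, and your reading repairs this). The only blemish is one of phrasing: you say ``$A+B-2C\equiv0$ forces $Q$ to be constant'', whereas what the argument needs, and what your own setup delivers immediately, is the pointwise implication that $A+B-2C=0$ at a given pair forces $Q_{\hat x}=Q_{\hat y}$ and hence $AB=C^2$ there, i.e., degenerate pairs are non-generic; stated pointwise, your argument is complete and in fact more rigorous than the paper's.
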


\begin{proof} 
Use $p_G^{(t)}(\hat x,\hat y) 
= {1\over\norm{x}^{2t}} {1\over\norm{y}^{2t}} p_G^{(t)}(x,y)$
to rewrite the previous conditions,
and then
multiply by $\norm{x}^{4t}\norm{y}^{4t}$. 
\end{proof}

Here the condition that the orbits $(gx)_{g\in G}$ and 
$(gy)_{g\in G}$ be generic is  
$f_G^{(t)}(x,y)\ne 0$.
Clearly, $f_G^{(t)}(x,y)=0$ if the orbits are equal or 
if both are spherical $(t,t)$-designs.
By way of comparison, the condition that every 
single orbit is a spherical $(t,t)$-design is that
$$ p_G^{(t)}(x,x)
= c_t(\Fd)\norm{x}^{4t}. $$
We will say that ``pairs of orbits give $(t,t)$-designs'', or similar,
if (\ref{pairsgivedesigns}) holds nontrivially.

Theorem \ref{two-orbits-thm} provides a computational way 
to verify when a generic pair of orbits has a unique
weighting giving a spherical $(t,t)$-design. We were able
to make this computation in magma for various groups $G$. 
Our preliminary results suggest:

\begin{changemargin}{0.70cm}{0.70cm} {\it 
Pairs of orbits give spherical $(t,t)$-designs
with $t$ higher than the generic strength,
for all complex reflection groups except the Coxeter group $D_4=G(2,2,4)$.
This also holds for many, but not all, irreducible representations.
}
\end{changemargin}

\noindent
The exact nature of these results is not yet clear, though it is 
related to
the irreducible unitarily invariant subspaces $H(p,q)$ of the
polynomials on $\Cd\cong\RR^{2d}$ (see \cite{R80}) that are 
integrated by the cubature rule for  a generic orbit.

Since the sum in (\ref{pGtdef}) is over all elements of the group $G$,
and cannot be simplified, e.g., by taking a transversal giving an
orbit of small size (as for highly symmetric tight frames) our 
calculations do not extend to all the groups considered 
in Section \ref{summarysect}.
 
We now give some selected examples.


\begin{example} Let $G$ be the dihedral group of order $6$
(a reflection group) generated by
$$ a=\pmat{-{1\over2}&-{\sqrt{3}\over2}\cr{\sqrt{3}\over2}&-{1\over2}}
\quad\hbox{\rm(rotation by ${2\pi\over3}$)},\qquad
b=\pmat{1&0\cr0&-1}\quad\hbox{\rm(reflection in the $x$-axis)}. $$
This is the first (faithful) irreducible group action in more than one dimension.

If $G$ acts on $\RR^2$, then every orbit is a $(2,2)$-design, 
so that 
$$ f_{G,\RR}^{(1)}=f_{G,\RR}^{(2)}=0, $$ 
and pairs of orbits give
$(t,t)$-designs for $\RR^2$ for $t=3,4,5$. Here
$$ f_{G,\RR}^{(3)}(x,y)
= 10\prod_{U\in\cU} (\inpro{x,Uy})^2,
$$
where $\cU$ is the set of unitary matrices
$$ \cU:=\left\{\pmat{0&1\cr-1&0},
\pmat{0&1\cr1&0},
\pmat{{\sqrt{3}\over2}&{1\over2}\cr{1\over2}&-{\sqrt{3}\over2}},
\pmat{{\sqrt{3}\over2}&{1\over2}\cr-{1\over2}&{\sqrt{3}\over2}},
\pmat{{\sqrt{3}\over2}&-{1\over2}\cr{1\over2}&{\sqrt{3}\over2}},
\pmat{{\sqrt{3}\over2}&-{1\over2}\cr-{1\over2}&-{\sqrt{3}\over2}}
\right\}, $$
and
$$ f_{G,\RR}^{(4)}(x,y)
 = \hbox{${7\over 4}$} \norm{x}^4\norm{y}^4 f_{G,\RR}^{(3)}(x,y),
\qquad f_{G,\RR}^{(5)}(x,y) =  (\hbox{${7\over 4}$} \norm{x}^4\norm{y}^4)^2
f_{G,\RR}^{(3)}(x,y). $$
It is not obvious from the definition (\ref{fgtdefn}) that 
these polynomials should be squares (or have common factors), 
or how the matrices in $\cU$ relate the elements of $G$.
If $G$ acts on $\CC^2$, then every orbit is a $(1,1)$-design, and pairs
of orbits give $(2,2)$-designs for $\CC^2$, where
$$ f_{G,\CC}^{(2)}(x,y)= 
\hbox{${1\over8}$}\bigl(\norm{x}^2\ga(y)+\norm{y}^2\gb(x)\bigr)^2
\bigl(\norm{x}^2\ga(y)+\norm{y}^2\gb(\overline{x})\bigr)^2, $$
with  
$$ \ga(y):=y_1\overline{y_2}-\overline{y_1}y_2, \qquad
\gb(x):=x_1\overline{x_2}-\overline{x_1}x_2.  $$
\end{example}

The lines in a spherical $(t,t)$-design for $\Cd$ which is an orbit depend only
the the matrices in the action group of the representation
up to unit scalar multiples.
Hence for the purpose of calculation, it suffices to take
a representative set of such matrices. A convenient way to
do this, is to take the associated group obtained by normalising
the matrices to have determinant $1$ (and taking all $d$ such
choices). This subgroup of $\cSU(\Cd)$ (as an abstract group)
was called a {\it canonical abstract error group} in \cite{CW17}.

The finite subgroups of $\cSU(\CC^2)$ are given by the ADE classification:
the binary tetrahedral, octahedral and icosahedral groups, together with
the binary dihedral groups $\cD_{2m}$ of order $4m$, which are generated
by the matrices
$$ a=\pmat{\go&0\cr0&\go}, \quad\go:=e^{2\pi i\over 2m},\quad
b=\pmat{0&-1\cr1&0}. $$
Except for $\cD_2\cong \ZZ_2\times\ZZ_2$, these are all irreducible
(see Theorem 5.14 of \cite{LT09} for details). A summary of our
calculations for these groups is given in Table \ref{2orbitdesigns2complexdims}.

\setlength{\tabcolsep}{3pt}
\renewcommand{\arraystretch}{1.2}
\begin{table}[H]
\caption{The unions of pairs of orbits for the
irreducible subgroups of $\cSU(\CC^2)$ (these correspond
to all irreducible representations). Here $t_{\rm generic}$ is
the strength of a generic orbit, and $t_{\rm pairs}$ is the range
of $t$ for which pairs of orbits give spherical $(t,t)$-designs.
 }
\begin{center}
\label{2orbitdesigns2complexdims}
\begin{tabular}{|p{4.9cm}| p{1.7cm}|p{1.3cm}||p{1.1cm}|p{1.1cm}|p{4cm}|}
\hline
Subgroup of $\cSU(\CC^2)$ & order & {\#}lines & $t_{\rm generic}$ & $t_{\rm pairs}$ & comments \\ \hline
Binary tetrahedral group $\cT$ & 24 & 12 & 1-2 & 3 & ST 4-7 (type $\cT$)  \\
Binary octahedral group $\cO$ & 48 & 24 & 1-3 & 4-5 & ST 8-15 (type $\cO$) \\
Binary icosahedral group $\cI$ & 120 & 60 & 1-5 & 6-9 & ST 16-22 (type $\cI$) \\
Binary dihedral group $\cD_4$ & 8 & 4 & 1 & $\{\}$ & associated real group  \\
Binary dihedral group $\cD_6$ & 12 & 6 & 1 & 2 & associated real group  \\
Binary dihedral group $\cD_{2m}$ & $4m\ge16$ & $2m$ & 1 & 2-3 & associated real group  \\
\hline
\end{tabular} 
\end{center}
\end{table}

The binary dihedral groups come from real representations,
and the corresponding pairs of real orbits (see Table \ref{2orbitdesigns2realdims})
give real spherical  $(t,t)$-designs. 
Let $D_{2m}=G(m,m,2)$ be the dihedral group of order $2m$
generated by  
$$ a=\pmat{\cos{2\pi\over m}&-\sin{2\pi\over m}\cr\sin{2\pi\over m}&\cos{2\pi\over m}},
\quad\hbox{\rm(rotation by ${2\pi\over m}$)},\qquad
b=\pmat{1&0\cr0&-1}\quad\hbox{\rm(reflection in the $x$-axis)}, $$
and $R_m=\inpro{a}$ be the rotation subgroup.
Since $b$ is a reflection, it does not have determinant $1$. Multiplying
it by the scalar $i$ gives a matrix in $\cSU(\CC^2)$. 
The subgroup $\inpro{a,ib}$ of $\cSU(\CC^2)$ is conjugate
to $\cD_{2m}$ for $m$ odd, 
and is conjugate to $\cD_m$ for $m$ even.

\setlength{\tabcolsep}{3pt}
\renewcommand{\arraystretch}{1.2}
\begin{table}[H]
\begin{center}
\caption{The unions of pairs of orbits for 
the irreducible subgroups of $\cO(\RR^2)$ (these correspond
to all irreducible representions).  }
\label{2orbitdesigns2realdims}
\begin{tabular}{|p{3.9cm}| p{2.4cm}|p{1.3cm}||p{2.1cm}|p{2.6cm}|p{2cm}|}
\hline
Subgroup of $\cO(\RR^2)$ & order & {\#}lines & $t_{\rm generic}$ & $t_{\rm pairs}$ & comments \\ \hline
Dihedral group $D_{2m}$ & $2m$ ($m$ odd) & $2m$ & $1,..\, ,(m-1)$ & $m,..\, ,(2m-1)$ & $m\ge3$  \\
Dihedral group $D_{2m}$ & $2m$ ($m$ even) & $m$ & $1,..\, ,({m\over2}-1)$ & ${m\over2},..\, ,(m-1)$ & $m\ge4$ \\
Rotation group $R_m$ & $m$ ($m$ odd) & $m$ & $m-1$ & $\{\}$ & $m\ge3$ \\
Rotation group $R_m$ & $m$ ($m$ even) & ${m\over2}$ & $m-1$ & $\{\}$ & $m\ge4$ \\
\hline
\end{tabular} 
\end{center}
\end{table}

We now list some additional calculations (Table \ref{selected2orbitdesigns}).
These include the Heisenberg group $\cH_d$ in $d$ dimensions,
which is generated by a cyclic shift $S$ and the modulation $\gO$, where
$Se_j=e_{j+1}$, $\gO e_j = \go^j e_j$, $j\in\ZZ_d$.

\setlength{\tabcolsep}{3pt}
\renewcommand{\arraystretch}{1.2}
\begin{table}[H]
\caption{The unions of pairs of orbits for selected 
groups.
Here $G=G(m,p,n)$, $p\mid m$   ($\vert G\vert=m^n n!/p$), 
is the infinite family in Shephard-Todd classification,
and the groups $B_d$, $D_d$, $H_3$, $F_4$ in brackets are from the Coxeter classification
of real reflection groups.  }
\begin{center}
\label{selected2orbitdesigns}
\begin{tabular}{|p{3.9cm}| p{0.5cm}|p{1.3cm}|p{1.3cm}||p{1.1cm}|p{1.1cm}|p{4cm}|}
\hline
Group & $d$ & order & {\#}lines& $t_{\rm generic}$ & $t_{\rm pairs}$ & comments \\ \hline
$G_{23}$ (ST 23, $H_3$) & 3 & 120 & 60 & 1-2 & 3-4 & real group \\
$G_{24}$ (ST 24) & 3 & 336 & 168 & 1-2 & 3 & complex group \\
$G_{25}$ (ST 25) & 3 & 648 & 216 & 1-2 & 3 & complex group \\
$G_{26}$ (ST 26) & 3 & 1296 & 216  & 1-2 & 3 & complex group \\
$G_{28}$ (ST 28, $F_4$) & 4 & 1152 & 576 & 1-2 & 3 & real group \\
$G(2,2,4)$ ($D_4$) & 4 & 192 & 96 & 1 & $\{\}$ & Example \ref{exceptions} \\
$G(2,2,d)$ ($D_d$) & $d$ & $2^{d-1}d!$ & & 1 & 2 & $d\ne 4$, $3\le d\le 7$ \\
$G(2,1,4)$ ($B_4$) & 2 & 348 & 192 & 1 & 2-3 & \\
$G(2,1,d)$ ($B_d$) & $d$ & $2^{d}d!$ & $2^{d-1}d!$ & 1 & 2 & $d\ne 2$, $2\le d\le 6$ \\
$\cH_2=D_8$ & 2 & 8 & 4 & 1 & 2-3 & $G(2,1,2)\cong G(4,4,2)$ \\
$\cH_d$ & $d$ & $d^3$ & $d^2$ & 1 & $\{\}$& $d\ge 3$, Example \ref{exceptions} \\
\hline
\end{tabular} 
\end{center}
\end{table}

\begin{example}
\label{exceptions}
The real reflection group $G(2,2,4)$ was the only complex reflection 
group we considered for which pairs of orbits do not give 
spherical $(t,t)$-designs. Even in this case, some pairs of highly symmetric
tight frames still give higher order $(t,t)$-designs
(see Table \ref{CoxetDd}).
Also, the Heisenberg
groups $\cH_d$, $d\ge3$ (which are not complex reflection groups) do 
not have the property that pairs of orbits give 
spherical $(t,t)$-designs. 
\end{example}


%
%
%


\section{Summary of calculations}
\label{summarysect}

In the following tables we summarise our calculations to find a 
weighting $(\gb_X,\gb_Y)$ so that a union of highly symmetric tight frames  
$X$ and $Y$ for a 
complex reflection group $G$ is a spherical $(t,t)$-design
with $t$ larger than the generic strength of an orbit. 

Here $ST$ is the Shephard-Todd number of $G$ acting on $\Rd,\Cd$,
$t$ is the strength of the union $X\cup Y$ above that of a generic orbit,
and $n$ is the number of lines in the union.  
We note that there is at least one highly symmetric tight frame
for each group. Such frames are identified by their number of lines,
with $4_2$ in Table \ref{2orbitdesigns2dims}
indicating that either of the two highly symmetric tight 
frames of $4$ points/lines can be taken.

\subsection{Primitive complex reflection groups}

\setlength{\tabcolsep}{3pt}
\renewcommand{\arraystretch}{1.2}
\begin{table}[H]
\caption{The unions of pairs of the highly symmetric tight frames 
for $\CC^2$ given by the complex reflection groups with Shephard-Todd
numbers 4-22
 which are 
$(t,t)$-designs. 
The groups $ST = 4,5,8,12,16,20,22$ have
only one highly symmetric tight frame.  }
\begin{center}
\label{2orbitdesigns2dims}
\begin{tabular}{|p{0.9cm}| p{0.7cm}|p{1.8cm}||p{0.8cm}|p{1.2cm}|p{2cm}|p{2cm}|p{2cm}|}
\hline
ST & $d$ & order & $t$ & $n$ & $|X|,\ |Y|$ & $\gb_X,\ \gb_Y$ & $\hat w_X,\ \hat w_Y$ \\ \hline
6 & 2 & 48 & 3 & 6 & 4,\ 6 & $0,\ 1$ & $0,\ 1$  \\ 
7 & 2 & 144 & 3 & 8 & 4,\ 4 & ${1\over2},\ {1\over2}$ & $1,\ 1$  \\ 
 &&& 3 & 6 & $4_2$,\ $6$ & $0,\ 1$ & $0,\ 1$ \\ 
9 & 2 & 192 & 4-5 & 18 & 6,\ 12 & ${1\over5},\ {4\over5}$ & ${3\over5},\ {6\over5}$  \\ 
10 & 2 & 288 & 4-5 & 14 & 6,\ 8 & ${2\over5},\ {3\over5}$ & ${14\over15},\ {21\over20}$  \\ 
11 & 2 & 576 & 4-5 & 14 & 6,\ 8 & ${2\over5},\ {3\over5}$ & ${14\over15},\ {21\over20}$  \\ 
 &&& 4-5 & 18 & 6,\ 12 & ${1\over5},\ {4\over5}$ & ${3\over5},\ {6\over5}$ \\ 
 &&& 4-5 & 20 & 8,\ 12 & $-{3\over5},\ {8\over5}$ & $-{3\over2},\ {8\over3}$ \\ 
13 & 2 & 96 & 4-5 & 18 & 6,\ 12 & ${1\over5},\ {4\over5}$ & ${3\over5},\ {6\over5}$  \\ 
14 & 2 & 144 & 4-5 & 20 & 8,\ 12 & $-{3\over5},\ {8\over5}$ & $-{3\over2},\ {8\over3}$ \\ 
15 & 2 & 288 & 4-5 & 14 & 6,\ 8 & ${2\over5},\ {3\over5}$ & ${14\over15},\ {21\over20}$  \\ 
 &&& 4-5 & 18 & 6,\ 12 & ${1\over5},\ {4\over5}$ & ${3\over5},\ {6\over5}$ \\ 
 &&& 4-5 & 20 & 8,\ 12 & $-{3\over5},\ {8\over5}$ & $-{3\over2},\ {8\over3}$ \\ 
17 & 2 & 1200 & 6-9 & 42 & 12,\ 30 & ${5\over21},\ {16\over21}$ & ${5\over6},\ {16\over15}$  \\ 
18 & 2 & 1800 & 6-9 & 32 & 12,\ 20 & ${5\over 14},\ {9\over 14}$ & ${20\over21},\ {36\over35}$  \\ 
19 & 2 & 3600 & 6-9 & 32 & 12,\ 20 & ${5\over 14},\ {9\over 14}$ & ${20\over21},\ {36\over35}$  \\ 
 &&& 6-9 & 42 & 12,\ 30 & ${5\over21},\ {16\over21}$ & ${5\over6},\ {16\over15}$ \\ 
 &&& 6-9 & 50 & 20,\ 30 & $-{9\over7},\ {16\over7}$ & $-{45\over14},\ {80\over21}$ \\ 
21 & 2 & 720 & 6-9 & 50 & 20,\ 30 & $-{9\over7},\ {16\over7}$ & $-{45\over14},\ {80\over21}$  \\  
\hline
\end{tabular} 
\end{center}
\end{table}

\setlength{\tabcolsep}{3pt}
\renewcommand{\arraystretch}{1.1}
\begin{table}[H]
\caption{The unions of pairs of the highly symmetric tight frames
for $\CC^d$ given by the complex reflection groups with Shephard-Todd
numbers in the range 23-37
 which are 
$(t,t)$-designs.
The groups $ST = 23,28,30,35,36,37$ are real reflection groups.  }
\begin{center}
\label{2orbitcomplexdesignsdims3-6}
\begin{tabular}{|p{0.9cm}| p{0.7cm}|p{1.8cm}||p{0.8cm}|p{1.2cm}|p{2.1cm}|p{2cm}|p{2cm}|}
\hline
ST & $d$ & order & $t$ & $n$ & $|X|,\ |Y|$ & $\gb_X,\ \gb_Y$ & $\hat w_X,\ \hat w_Y$ \\ \hline
24 & 3 & 336 & 3 & 49 & 21,\ 28 & ${26\over35},\ {9\over35}$ & ${26\over15},\ {9\over20}$  \\ 
25 & 3 & 648 & 3 & 21 & 9,\ 12 & ${2\over 5},\ {3\over 5}$ & ${14\over15},\ {21\over20}$  \\ 
26 & 3 & 1296 & 3 & 21 & 9,\ 12 & ${2\over 5},\ {3\over 5}$ & ${14\over15},\ {21\over20}$  \\ 
 &&& 3 & 45 & 9,\ 36 & $-{1\over5},\ {6\over5}$ & $-1,\ {3\over2}$ \\ 
 &&& 3-4 & 48 & 12,\ 36 & ${1\over5},\ {4\over5}$ & ${4\over5},\ {16\over15}$ \\ 
27 & 3 & 2160 & 4 & 81 & 36,\ 45 & ${5\over 9},\ {4\over 9}$ & ${5\over4},\ {4\over5}$  \\ 
 &&& 4 & 96 & 36,\ $60_2$ & ${5\over8},\ {3\over8}$ & ${5\over 3},\ {3\over5}$ \\ 
 &&& 4 & 105 & 45,\ $60_2$ & $4,\ -3$ & ${28\over 3},\ -{21\over 4}$ \\ 

\hline

29 & 4 & 7680 & 3 & 60 & 20,\ 40 & ${1\over 3},\ {2\over 3}$ & $1,\ 1$  \\ 
 &&& 3 & 100 & 20,\ $80_2$ & $-{1\over 3},\ {4\over 3}$ & $-{5\over 3},\ {5\over 3}$ \\ 
 &&& 3 & 120 & 40,\ $80_2$ & ${1\over 3},\ {2\over 3}$ & $1,1$ \\ 
 &&& 3 & 180 & 20,\ 160 & $-{7\over 11},\ {18\over 11}$ & $-{63\over 11},\ {81\over 44}$ \\ 
 &&& 3 & 200 & 40,\ 160 & ${7\over 16},\ {9\over 16}$ & ${35\over 16},\ {45\over 64}$ \\ 
 &&& 3 & 240 & $80_2$,\ 160 & ${14\over 5},\ -{9\over 5}$ & ${42\over 5},\ -{27\over 10}$ \\ 

31 & 4 & 46080 & 4-5 & 540 & 60,\ 480 & ${5\over 21},\ {16\over 21}$ & ${15\over7},\ {6\over7}$  \\ 
 &&& 4-5 & 1020 & 60,\ 960 & ${1\over 28},\ {27\over 28}$ & ${17\over28},\ {459\over 448}$ \\ 
&&& 4-5 & 1440 & 480,\ 960 & $-{16\over119},\ {135\over119}$ & $-{48\over119},\ {405\over 238}$ \\ 
32 & 4 & 155520 & 4-5 & 400 & 40,\ 360 & ${1\over 7},\ {6\over 7}$ & ${10\over7},\ {20\over21}$  \\ 
\hline
33 & 5 & 51840 & 3 & 85 & 40,\ 45 & ${3\over 7},\ {4\over 7}$ & ${51\over 56},\ {68\over 63}$  \\ 
 &&& 3 & 256 & 40,\ 216 & ${3\over 28},\ {25\over 28}$ & ${24\over 35},\ {200\over 189}$ \\ 
 &&& 3 & 261 & 45,\ 216 & $-{4\over21},\ {25\over 21}$ & $-{116\over 105},\ {725\over 504}$ \\ 
&&& 3 & 580 & 40,\ 540 & $-{15\over 49},\ {64\over 49}$ & ${-{435\over 98}},\ {1856\over 1323}$ \\ 
 & & & 3 & 585 & 45,\ 540 & ${5\over21},\ {16\over 21}$ & ${65\over 21},\ {52\over 63}$ \\ 
 & & & 3 & 756 & 216,\ 540 & ${125\over189},{64\over 189}$ &
${125\over 54},\ {64\over 135}$ \\ 
\hline
34 & 6 & 39191040 & 4 & 672 & 672,\ $|Y|$ & $1,\ 0$ & $1,\ 0$  \\ 
 &&& 4 & 9072 & 9072,\ $|Y|$ & $1,\ 0$ & $1,\ 0$ \\ 
 &&& 4-5 & 3528 & 126,\ 3402 & ${1\over9},\ {8\over9}$ & ${28\over9},\ {224\over243}$ \\ 
 &&& 4 & 5166 & 126,\ 5040 & $-{1\over4},\ {5\over4}$ & $-{41\over4},\ {41\over32}$ \\ 
 &&& 4 & 8442 & 3402,\ 5040 & ${8\over13},\ {5\over13}$ & ${536\over351},\, {67\over104}$ \\ 
 &&& 4-5 & $9744_2$ & 672,\ $9072_2$ & $-{1\over5},\ {6\over5}$ & $-{29\over10},\ {58\over45}$ \\ 
 &&& 4 & 18144 & 9072,\ 9072 & $\gb,\ 1-\gb$ & $2\gb,2(1-\gb)$ \\ 
 &&& $\ge4$ & 27342 & 126,\ 27216 & $-{13\over612},\ {625\over612}$ & $-{2821\over612},\ {135625\over132192}$ \\ 
 &&& $\ge4$ & 30618 & 3402,\ 27216 & ${104\over729},\ {625\over729}$ & ${104\over81},\ {625\over648}$ \\ 
 &&& $\ge4$ & 32256 & 5040,\ 27216 & $-{13\over112},\ {125\over112}$ & $-{26\over35},\ {250\over189}$ \\ 
 &&& $\ge4$ & 45486 & 126,\ 45360 & $-{3\over37},\ {40\over37}$ & $-{1083\over37},\ {361\over333}$ \\ 

\hline
\end{tabular} 
\end{center}
\end{table}

\setlength{\tabcolsep}{3pt}
\renewcommand{\arraystretch}{1.1}
 \begin{center}
\LTcapwidth=\linewidth
\begin{longtable}{|p{0.9cm}|p{0.9cm}| p{0.7cm}|p{1.8cm}||p{0.8cm}|p{1.2cm}|p{2.3cm}|p{2.6cm}|p{2.6cm}|}
 \caption{ 
The unions of pairs of the highly symmetric tight frames
for $\Rd$ given by the real reflection groups $G$ with Shephard-Todd
numbers $23,28,30,35,36,37$ which are spherical 
$(t,t)$-designs. The Coxeter classification names are included
under $G$.  }
\label{2orbitRealdesignsdims} \\
\hline
ST & $G$ & $d$ & order & $t$ & $n$ & $|X|,\ |Y|$ & $\gb_X,\ \gb_Y$ & $\hat w_X,\ \hat w_Y$ \\ \hline
23 & $H_3$ & 3 & 120 & 3-4 & 16 & 6,\ 10 & ${5\over14},\ {9\over14}$ & ${20\over21},\ {36\over35}$  \\
 &&&& 3-4 & 21 & 6,\ 15 & ${5\over21},\ {16\over21}$ & ${5\over6},\ {16\over15}$ \\
 &&&& 3-4 & 25 & 10,\ 15 & $-{9\over7},\ {16\over7}$ & $-{45\over14},\ {80\over21}$ \\
\hline
28 & $F_4$ & 4 & 1152 & 3 & 24 & 12,\ 12 & ${1\over2},\ {1\over2}$ & $1,\ 1$  \\
 &&&& 3 & $60_2$ & 12,\ 48 & $-{1\over8},\ {9\over8}$ & $-{5\over8},\ {45\over32}$ \\
 &&&& 3 & $60_2$ & 12,\ 48 & ${1\over10},\ {9\over10}$ & ${1\over2},\ {9\over8}$ \\
 &&&& 3 & 96 & 48,\ 48 & ${1\over2},\ {1\over2}$ & $1,\ 1$ \\

30 & $H_4$ & 4 & 14400 & 6-9 & 360 & 60,\ 300 & ${5\over21},\ {16\over21}$ & ${10\over7},\ {32\over35}$  \\
 &&&& 6-9 & 420 & 60,\ 360 & ${3\over28},\ {25\over28}$ & ${3\over4},\ {25\over24}$ \\
 &&&& 6-9 & 660 & 60,\ 600 & ${65\over308},\ {234\over308}$ & ${65\over28},\ {243\over280}$ \\
 &&&& 6-9 & 660 & 300,\ 360 & $-{48\over77},\ {125\over77}$ & $-{48\over35},\ {125\over42}$ \\
 &&&& 6-9 & 900 & 300,\ 600 & $-{208\over35},\ {243\over35}$ & $-{624\over35},\ {729\over70}$ \\
 &&&& 6-9 & 960 & 360,\ 600 & ${1625\over896},\ -{729\over896}$ & ${1625\over336},\ -{729\over560}$ \\
\hline
35 & $E_6$ & 6 & 51840 & 3 & 63 & 27,\ 36 & ${2\over5},\ {3\over5}$ & ${14\over15},\ {21\over20}$  \\
 &&&& 3 & 243 & 27,\ 216 & ${2\over27},\ {25\over27}$ & ${2\over3},\ {25\over24}$ \\
 &&&& 3 & 252 & 36,\ 216 & $-{3\over22},\ {25\over22}$ & $-{21\over22},\ {175\over132}$ \\
 &&&& 3 & 387 & 27,\ 360 & ${2\over11},\ {9\over11}$ & ${86\over33},\ {387\over440}$ \\
 &&&& 3 & 396 & 36,\ 360 & $-{1\over2},\ {3\over2}$ & $-{11\over2},\ {33\over20}$ \\
 &&&& 3 & 576 & 216,\ 360 & ${25\over16},\ -{9\over16}$ & ${25\over6},\ -{9\over10}$ \\
\hline
36 & $E_7$ & 7 & 2903040 & 3 & 91 & 28,\ 63 & ${3\over11},\ {8\over11}$ & ${39\over44},\ {104\over99}$  \\
 &&&& 3 & 316 & 28,\ 288 & ${6\over55},\ {49\over55}$ & ${474\over385},\ {3871\over3960}$ \\
 &&&& 3 & 351 & 63,\ 288 & $-{16\over33},\ {49\over33}$ & $-{208\over77},\ {637\over352}$ \\
 &&&& 3 & 406 & 28,\ 378 & $-{9\over55},\ {64\over55}$ & $-{261\over110},\ {1856\over1485}$ \\
 &&&& 3 & 441 & 63,\ 378 & ${3\over11},\ {8\over11}$ & ${21\over11},\ {28\over33}$ \\
 &&&& 3 & 666 & 288,\ 378 & ${147\over275},\ {128\over275}$ & ${5439\over4400},\ {4736\over5775}$ \\
 &&&& 3 & 1036 & 28,\ 1008 & ${7\over55},\ {48\over55}$ & ${259\over55},\ {148\over165}$ \\
 &&&& 3 & 1071 & 63,\ 1008 & $-{7\over11},\ {18\over11}$ & $-{119\over11},\ {153\over88}$ \\
 &&&& 3 & 1296 & 288,\ 1008 & ${343\over55},\ -{288\over55}$ & ${3087\over110},\ -{2592\over385}$ \\
 &&&& 3 & 1386 & 378,\ 1008 & ${28\over55},\ {27\over55}$ & ${28\over15},\ {27\over40}$ \\
 &&&& 3 & 2044 & 28,\ 2016 & ${2\over77},\ {75\over77}$ & ${146\over77},\ {1825\over1848}$ \\
 &&&& 3 & 2079 & 63,\ 2016 & $-{16\over209},\ {225\over209}$ & $-{48\over19},\ {675\over608}$ \\
 &&&& 3 & 2304 & 288,\ 2016 & $-{49\over176},\ {225\over176}$ & $-{49\over22},\ {225\over154}$ \\
 &&&& 3 & 2394 & 378,\ 2016 & ${128\over803},\ {675\over803}$ & ${2432\over2409},\ {12825\over12848}$ \\
 &&&& 3 & 3024 & 1008,\ 2016 & $-{32\over143},\ {175\over143}$ & $-{96\over143},\ {525\over286}$ \\
 &&&& 3 & 5068 & 28,\ 5040 & ${17\over209},\ {192\over209}$ & ${3077\over209},\ {2896\over3135}$ \\
 &&&& 3 & 5103 & 63,\ 5040 & $-{17\over55},\ {72\over55}$ & $-{1377\over55},\ {729\over550}$ \\
 &&&& 3 & 5328 & 288,\ 5040 & $-{883\over319},\ {1152\over319}$ & $-{30821\over638},\ {42624\over11165}$ \\
 &&&& 3 & 5418 & 378,\ 5040 & ${17\over44},\ {27\over44}$ & ${731\over132},\ {1161\over1760}$ \\
 &&&& 3 & 6048 & 1008,\ 5040 & $-{17\over11},\ {28\over11}$ & $-{102\over11},\ {168\over55}$ \\
 &&&& 3 & 7056 & 2016,\ 5040 & ${425\over297},\ -{128\over297}$ & ${2975\over594},\ -{896\over1485}$ \\
\hline
37 & $E_8$ & 8 & 696729600 & 4-5 & 1200 & 120,\ 1080 & ${1\over7},\ {6\over7}$ & ${10\over7},\ {20\over21}$  \\
 &&&& 4-5 & 3480 & 120,\ 3360 & $-{1\over8},\ {9\over8}$ & $-{29\over8},\ {261\over224}$ \\
 &&&& 4-5 & 4440 & 1080,\ 3360 & ${2\over5},\ {3\over5}$ & ${74\over45},\ {111\over140}$ \\
 &&&& 4-5 & 8760 & 120,\ 8640 & ${3\over35},\ {32\over35}$ & ${219\over35},\ {292\over315}$ \\
 &&&& 4-5 & 9720 & 1080,\ 8640 & $-{9\over7},\ {16\over7}$ & $-{81\over7},\ {18\over7}$ \\
 &&&& 4-5 & 12000 & 3360,\ 8640 & ${27\over59},\ {32\over59}$ & ${675\over413},\ {400\over531}$ \\
 &&&& 4-5 & 30360 & 120,\ 30240 & ${1\over55},\ {54\over55}$ & ${23\over5},\ {69\over70}$ \\
 &&&& 4-5 & 31320 & 1080,\ 30240 & $-{1\over8},\ {9\over8}$ & ${29\over8},\ {261\over224}$ \\
 &&&& 4-5 & 33600 & 3360,\ 30240 & ${1\over7},\ {6\over7}$ & ${10\over7},\ {20\over21}$ \\
 &&&& 4-5 & 34680 & 120,\ 34560 & ${33\over376},\ {343\over376}$ & ${9537\over376},\ {99127\over108288}$ \\
 &&&& 4-5 & 35640 & 1080,\ 34560 & $-{198\over145},\ {343\over145}$ & $-{6534\over145},\ {11319\over4640}$ \\
 &&&& 4-5 & 37920 & 3360,\ 34560 & ${297\over640},\ {343\over640}$ & ${23463\over4480},\ {27097\over46080}$ \\
 &&&& 4-5 & 38880 & 8640,\ 30240 & $-{16\over65},\ {81\over65}$ & $-{72\over65},\ {729\over455}$ \\
 &&&& 4-5 & 43200 & 8640,\ 34560 & ${352\over9},\ -{343\over9}$ & ${1760\over9},\ -{1715\over36}$ \\
 &&&& 4-5? & 64800 & 30240,\ 34560 & ${1782\over1439},\ -{343\over1439}$ & ${26730\over10073},\ -{5145\over11512}$ \\
 &&&& 4-5? & 121080 & 120,\ 120960 & ${3\over53},\ {50\over53}$ & ${3027\over53},\ {25225\over26712}$ \\
 &&&& 4-5? & 122040 & 1080,\ 120960 & $-{9\over16},\ {25\over16}$ & $-{1017\over16},\ {2825\over1792}$ \\
 &&&& 4-5? & 124320 & 3360,\ 120960 & ${27\over77},\ {50\over77}$ & ${999\over77},\ {925\over1386}$ \\

\hline
\end{longtable} 
\end{center}

\subsection{Imprimitive complex reflection groups}

\setlength{\tabcolsep}{3pt}
\renewcommand{\arraystretch}{1.15}
\begin{table}[H]
\caption{Selected examples for 
the Coxeter groups $A_{d}=G(1,1,d+1)\cong S_{d+1}$, $d\ge2$.
These $n$ vector spherical $(2,2)$-designs give rise to spherical $5$-designs
with $2n$ vectors.
}
\begin{center}
\label{CoxeterAd}
\begin{tabular}{|p{1.8cm}| p{0.7cm}|p{2.1cm}||p{0.8cm}|p{1.2cm}|p{2.0cm}|p{2.0cm}|p{2.0cm}|}
\hline
ST & $d$ & order & $t$ & $n$ & $|X|,\ |Y|$ & $\gb_X,\ \gb_Y$ & $\hat w_X,\ \hat w_Y$ \\ \hline
(1,1,4) & 3 & 24 & 2 & 7 & 3,\ 4 & ${2\over5},\ {3\over5}$ & ${14\over15},\ {21\over20}$  \\
(1,1,5) & 4 & 120 & 2 & 15 & 5,\ 10 & ${2\over5},\ {3\over5}$ & ${6\over5},\ {9\over10}$  \\
(1,1,6) & 5 & 720 & 2 & 16 & 6,\ 10 & ${5\over14},\ {9\over14}$ & ${20\over21},\ {36\over35}$  \\
 &&& 2 & 21 & 6,\ 15 & ${5\over21},\ {16\over21}$ & ${5\over6},\ {16\over15}$ \\
(1,1,7) & 6 & 5040 & 2 & 28 & 7,\ 21 & ${3\over28},\ {25\over28}$ & ${3\over7},\ {25\over21}$  \\
 &&& 2 & 42 & 7,\ 35 & ${2\over7},\ {5\over7}$ & ${12\over7},\ {6\over7}$ \\
(1,1,8) & 7 & 40320 & 2 & 28 & 28,\ $|Y|$ & ${1},\ {0}$ & ${1},\ {0}$  \\
 &&& 2 & 43 & 8,\ 35 & ${7\over27},\ {20\over27}$ & ${301\over216},\ {172\over189}$ \\
 &&& 2 & 64 & 8,\ 56 & ${7\over32},\ {250\over32}$ & ${7\over4},\ {25\over28}$ \\
(1,1,9) & 8 & 362880 & 2 & 45 & 9,\ 36 & $-{4\over45},\ {49\over45}$ & $-{4\over9},\ {49\over36}$  \\
 &&& 2 & 93 & 9,\ 84 & ${4\over25},\ {21\over25}$ & ${124\over75},\ {93\over100}$ \\
(1,1,10) & 9 & 3628800 & 2 & 55 & 10,\ 45 & $-{9\over55},\ {64\over55}$ & $-{9\over10},\ {64\over45}$  \\
 &&& 2 & 130 & 10,\ 120 & ${6\over55},\ {49\over55}$ & ${78\over55},\ {637\over660}$ \\
(1,1,11) & 10 & 39916800 & 2 & 66 & 11,\ 55 & $-{5\over22},\ {27\over22}$ & $-{15\over11},\ {81\over55}$  \\
 &&& 2 & 176 & 11,\ 165 & ${5\over77},\ {72\over77}$ & ${80\over77},\ {384\over385}$ \\
\hline
\end{tabular}
\end{center}
\end{table}

\setlength{\tabcolsep}{3pt}
\renewcommand{\arraystretch}{1.15}
\begin{table}[H]
\caption{Selected examples for 
the Coxeter groups $B_{d}=G(2,1,d)$, $d\ge2$,
for orbits of the vectors $x=e_1+e_2+\cdots+e_k$, $1\le k\le d$.
}
\begin{center}
\label{CoxeterBd}
\begin{tabular}{|p{1.8cm}| p{0.7cm}|p{2.1cm}||p{0.8cm}|p{1.2cm}|p{2.0cm}|p{2.0cm}|p{2.0cm}|}
\hline
ST & $d$ & order & $t$ & $n$ & $|X|,\ |Y|$ & $\gb_X,\ \gb_Y$ & $\hat w_X,\ \hat w_Y$ \\ \hline
(2,1,2) & 2 & 8 & 2-3 & 4 & 2,\ 2 & ${1\over2},\ {1\over2}$ & ${1},\ {1}$  \\
(2,1,3) & 3 & 48 & 2 & 7 & 3,\ 4 & ${2\over5},\ {3\over5}$ & ${14\over15},\ {21\over20}$  \\
 &&& 2 & 9 & 3,\ 6 & ${1\over5},\ {4\over5}$ & ${3\over5},\ {6\over5}$ \\
(2,1,4) & 4 & 384 & 2 & 12 & 4,\ 8 & ${1\over3},\ {2\over3}$ & ${1},\ {1}$  \\
 &&& 2 & 12 & 12,\ $|Y|$ & ${1},\ {0}$ & ${1},\ {0}$ \\
 &&& 2 & 20 & 4,\ 16 & ${1\over4},\ {3\over4}$ & ${5\over4},\ {15\over16}$ \\
(2,1,5) & 5 & 3840 & 2 & 21 & 5,\ 16 & ${2\over7},\ {5\over7}$ & ${6\over5},\ {15\over16}$  \\
 &&& 2-3 & 45 & 5,\ 40 & ${1\over7},\ {6\over7}$ & ${9\over7},\ {27\over28}$ \\
(2,1,6) & 6 & 46080 & 2 & 36 & 6,\ 30 & $-{1\over4},\ {5\over4}$ & $-{3\over2},\ {3\over2}$  \\
 &&& 2 & 38 & 6,\ 32 & ${1\over4},\ {3\over4}$ & ${19\over12},\ {57\over64}$ \\
(2,1,7) & 7 & 645120 & 2 & 49 & 7,\ 42 & $-{1\over3},\ {4\over3}$ & $-{7\over3},\ {14\over9}$  \\
 &&& 2 & 71 & 7,\ 64 & ${2\over9},\ {7\over9}$ & ${142\over63},\ {497\over576}$ \\
(2,1,8) & 8 & 10321920 & 2 & 64 & 8,\ 56 & $-{2\over5},\ {7\over5}$ & $-{16\over5},\ {8\over5}$  \\
 &&& 2 & 136 & 8,\ 128 & ${1\over5},\ {4\over5}$ & ${17\over5},\ {17\over20}$ \\
 &&& 2-3 & 184 & 56,\ 128 & ${7\over15},\ {8\over15}$ & ${23\over15},\ {23\over30}$ \\
 &&& 2-3 & 568 & 8,\ 560 & ${1\over15},\ {14\over15}$ & ${71\over15},\ {71\over75}$ \\
(2,1,9) & 9 & 185794560 & 2 & 81 & 9,\ 72 & $-{5\over11},\ {16\over11}$ & $-{45\over11},\ {18\over11}$  \\
 &&& 2 & 265 & 9,\ 256 & ${2\over11},\ {9\over11}$ & ${530\over99},\ {2385\over2816}$ \\
(2,1,10) & 10 & 3715891200 & 2 & 100 & 10,\ 90 & $-{1\over2},\ {3\over2}$ & $-{5},\ {5\over3}$  \\
 &&& 2 & 490 & 10,\ 480 & $-{1\over8},\ {9\over8}$ & $-{49\over48},\ {147\over128}$ \\
 &&& 2 & 522 & 10,\ 512 & ${1\over6},\ {5\over6}$ & ${87\over10},\ {435\over512}$ \\
 &&& 2-3 & 7770 & 90,\ 7680 & ${3\over10},\ {7\over10}$ & ${259\over10},\ {1813\over2560}$ \\
\hline
\end{tabular}
\end{center}
\end{table}

\setlength{\tabcolsep}{3pt}
\renewcommand{\arraystretch}{1.15}
\begin{table}[H]
\caption{Selected examples for the Coxeter groups 
$D_{d}={\rm G}(2,2,d)$, $d\ge3$, for 
orbits of the vectors $x=e_1+e_2+\cdots+e_k$, $1\le k\le d$.
Note that $G(2,2,2)$ is not irreducible, and
$G(2,2,3)\cong G(1,1,4)$. }
\begin{center}
\label{CoxetDd}
\begin{tabular}{|p{1.9cm}| p{0.7cm}|p{2.1cm}||p{0.8cm}|p{1.2cm}|p{2.1cm}|p{2.1cm}|p{2.1cm}|}
\hline
ST & $d$ & order & $t$ & $n$ & $|X|,\ |Y|$ & $\gb_X,\ \gb_Y$ & $\hat w_X,\ \hat w_Y$ \\ \hline
(2,2,3) & 3 & 24 & 2 & 7 & 3,\ 4 & ${2\over5},\ {3\over5}$ & ${14\over15},\ {21\over20}$  \\
 &&& 2 & 9 & 3,\ 6 & ${1\over5},\ {4\over5}$ & ${3\over5},\ {6\over5}$ \\
 &&& 2 & 10 & 4,\ 6 & $-{3\over5},\ {8\over5}$ & $-{3\over2},\ {8\over3}$ \\
(2,2,4) & 4 & 192 & \{\} & 8 & 4,\ 4 & & \\
 &&& 2 & 12 & 12,\ $|Y|$ & ${1},\ {0}$ & ${1},\ {0}$ \\
 &&& 2 & 20 & 4,\ 16 & ${1\over4},\ {3\over4}$ & ${5\over4},\ {15\over16}$ \\
 &&& $\{\}$ & 20 & 4,\ 16 & & \\
(2,2,5) & 5 & 1920 & 2 & 21 & 5,\ 16 & ${2\over7},\ {5\over7}$ & ${6\over5},\ {15\over16}$  \\
(2,2,6) & 6 & 23040 & 2 & 22 & 6,\ 16 & ${1\over4},\ {3\over4}$ & ${11\over12},\ {33\over32}$  \\
(2,2,7) & 7 & 322560 & 2 & 49 & 7,\ 42 & $-{1\over3},\ {4\over3}$ & $-{7\over3},\ {14\over9}$  \\
 &&& 2 & 71 & 7,\ 64 & ${2\over9},\ {7\over9}$ & ${142\over63},\ {497\over576}$ \\
 &&& 2 & 140 & 140,\ $|Y|$ & $1,\ 0$ & $1,\ 0$ \\

(2,2,8) & 8 & 5160960 & 2 & 64 & 8,\ 56 & $-{2\over5},\ {7\over5}$ & $-{16\over5},\ {8\over5}$  \\
 &&& 2 & 72 & 8,\ 64 & ${1\over5},\ {4\over5}$ & ${9\over5},\ {9\over10}$ \\
 &&& 2-3 & 120 & 56,\ 64 & ${7\over15},\ {8\over15}$ & ${1},\ {1}$ \\
 &&& 2-3 & 568 & 8,\ 560 & ${1\over15},\ {14\over15}$ & ${71\over15},\ {71\over75}$ \\
(2,2,9) & 9 & 92897280 & 2 & 81 & 9,\ 72 & $-{5\over11},\ {16\over11}$ & $-{45\over11},\ {18\over11}$  \\
 &&& 2 & 265 & 9,\ 256 & ${2\over11},\ {9\over11}$ & ${530\over99},\ {2385\over2816}$ \\
(2,1,10) & 10 & 1857945600 & 2 & 100 & 10,\ 90 & $-{1\over2},\ {3\over2}$ & $-{5},\ {5\over3}$  \\
 &&& 2 & 266 & 10,\ 256 & ${1\over6},\ {5\over6}$ & ${133\over30},\ {665\over768}$ \\
 &&& 2 & 1680 & 1680,\ $|Y|$ & ${1},\ {0}$ & ${1},\ {0}$ \\
 &&& 2-3 & 7770 & 90,\ 7680 & ${3\over10},\ {7\over10}$ & ${259\over10},\ {1813\over2560}$ \\
\hline
\end{tabular}
\end{center}
\end{table}

\vfil\eject
\subsection{Observations and examples}

We first observe that it is possible to have a weighting $(\gb_X,\gb_Y)$
with a {\em negative value} that gives a union $X\cup Y$
which is a $(t,t)$-design of higher order. We will refer to a $(t,t)$-design
with some negative weights as a {\bf signed} $(t,t)$-design.
Signed $(1,1)$-designs were first studied in \cite{PW02}, where
they were called {\it signed tight frames} and defined as systems
with
$$ f=\sum_j c_j\inpro{f,\phi_j}\phi_j, \qquad \forall f\in\Fd, $$
where $c_j\in\RR$ and $\phi_j\in\SS$. The equivalence of these
two notions is easily proved.

In some cases, pairs of orbits can give $(t,t)$-designs with strength
$t_{\rm pairs}>t_{\rm generic}+1$.
We illustrate the mechanism
for this with an example. Consider the Shephard-Todd groups numbered 9 to 15.
For these, an orbit gives a cubature rule for $P=\Hom(t,t)$, $t=1,2,3,5$, as does
any union of orbits. Thus a union of orbits is a $(4,4)$-design if and
only if it is a $(5,5)$-design. There are also examples, such as the
Shephard-Todd group 26, where some, but not all, pairs of orbits have strength
greater than $t_{\rm generic}+1$.

In \cite{HW18} a numerical study was done to find ``putatively optimal''
$(t,t)$-designs. We now consider our constructions in relation
to the table in \cite{HW18}
 (and \cite{W18}). 

\begin{example}
For $\CC^2$ the putatively optimal $(t,t)$-designs for $t=3,4,5$ come as 
highly symmetric tight frames (one orbit). 
For $t=8,9$ the putatively 
optimal number of vectors was estimated to be $37$ and $44$. Since
we have constructed a $(9,9)$-design of $32$ vectors for $\CC^2$
as a union of orbits of size $12$ and $20$,
these numbers can be improved.
\end{example}

\begin{example}
For $\CC^3$ the putatively optimal $(3,3)$-design had $22$ vectors,
and we give one with $21$ vectors.
The putatively optimal $(4,4)$-design had $47$ vectors, and we
give one with $48$ vectors.
\end{example}

\begin{example}
For $\CC^4$ the putatively optimal $(4,4)$-design had more than $85$ vectors,
and there was no estimate for $(5,5)$-designs.
Here we give a $(5,5)$-design with $400$ vectors.
\end{example}

\begin{example}
\label{JosiahEx}
For $\CC^5$ the putatively optimal $(3,3)$-design had more than $100$ vectors.
Here we give a $(3,3)$-design of $85$ vectors for $\CC^5$. 
This design was found by \cite{BGMPV19} by optimizing a potential,
and then presented explicitly (in terms of root vectors of $G_{33}$).
\end{example}

\begin{example}
For $\CC^6$ the highly symmetric tight frame of $672$ vectors
for the group $G_{34}$ was identified as a $(4,4)$-design (higher strength
than a generic orbit), and a pair of orbits gives a $(5,5)$-design
of $3528$ vectors. 
\end{example}

We now consider examples for real reflection groups. 
We note that if $X$ is a spherical $(t,t)$-design of $n$ vectors for $\Rd$,
then $X\cup-X$ (with the same weight on $x$ and $-x$) 
is a spherical $(2t+1)$-design of $2n$ vectors for $\Rd$ (see 
\cite{HW18II}). 

\begin{example}
For the Shephard-Todd group $G_{23}$ a union of orbits of size $6$ and $10$ 
gives a spherical $(4,4)$-design for $\RR^3$ 
(with normalised weights ${20\over21}$, ${36\over35}$).
For the Shephard-Todd group $G(1,1,6)$ acting on five 
dimensional space a union of orbits of size $6$ and $10$ 
gives a spherical $(2,2)$-design for $\RR^5$ 
(with normalised weights ${20\over21}$, ${36\over35}$).
These putatively optimal spherical half-designs were given in 
in \cite{HW18II}. 
\end{example}

\begin{example} 
(Tables \ref{2orbitRealdesignsdims} and \ref{CoxeterBd})
A union of pairs of highly symmetric tight frames for real reflection groups
gives $(3,3)$-designs of
$4$ vectors for $\RR^2$,
$16$ vectors for $\RR^3$,
$24$ vectors for $\RR^4$, 
$45$ vectors for $\RR^5$, 
$63$ vectors for $\RR^6$, 
$91$ vectors for $\RR^7$,  and
$184$ vectors for $\RR^8$.
\end{example}

\begin{example} 
For the Shephard-Todd group $G_{30}$ a generic orbit is a 
$(5,5)$-design for $\RR^4$. A union of highly symmetric
tight frames with $60$ and $300$ vectors gives a $(9,9)$-design 
for $\RR^4$ (with normalised weights ${10\over7}$, ${32\over 35}$).
By taking these vectors and their negatives one obtains a $720$
vector spherical $19$-design for $\RR^4$. It has been shown \cite{BB09}
that there is a single orbit of $G_{30}=W(H_4)$ which gives 
a spherical $19$-design for $\RR^4$. The vectors $x$ giving such
orbits are the roots of the harmonic polynomial of degree $12$ which is
invariant under the action of $G_{30}$, and the orbit size is 
nominally $14400$ vectors.
\end{example}

There is ongoing work of \cite{BGMPV19} on minimising a $p$-frame energy
on a sphere. They present various tables of putatively optimal spherical
$(t,t)$-designs that they have collected from the literature and 
calculated (see Example \ref{JosiahEx}). Many of these a clearly 
examples of our general construction (by a comparision of number
of vectors and weights). These include a $24$-point $(3,3)$-design
for $\RR^4$, a $22$-point $(2,2)$-design for $\RR^6$,
a $63$-point $(3,3)$-design for for $\RR^6$, 
a $91$-point $(3,3)$-design for for $\RR^7$,
and a $21$-point $(3,3)$-design for for $\CC^3$.

\subsection{Conclusion}

We have demonstrated that it is possible to take a union of two orbits to 
obtain a spherical $(t,t)$-design of higher strength than that of 
a generic orbit, i.e., $t_{\rm generic}$, and some of these designs have a minimal 
number of vectors. Given that $t_{\rm generic}\le t_{\rm max}(d)$ 
(for some function $t_{\rm max}$)
for every group acting on $\Rd$, $d\ge3$, it is not possible to find arbitrary
strong $(t,t)$-designs as a single orbit (by selecting a sufficiently 
large group), and so this technique might be useful for finding
designs with strength $t>t_{\rm max}(d)$. 
We note that $t_{\rm max}$ has not yet been determined.

\bibliographystyle{alpha}
\bibliography{union-refs}

\newcommand{\etalchar}[1]{$^{#1}$}
\begin{thebibliography}{BGM{\etalchar{+}}19}

\bibitem[ACFW18]{ACFW18}
Marcus Appleby, Tuan-Yow Chien, Steven Flammia, and Shayne Waldron.
\newblock Constructing exact symmetric informationally complete measurements
  from numerical solutions.
\newblock {\em J. Phys. A}, 51(16):165302, 40, 2018.

\bibitem[Ban79]{B79}
Eiichi Bannai.
\newblock On some spherical {$t$}-designs.
\newblock {\em J. Combin. Theory Ser. A}, 26(2):157--161, 1979.

\bibitem[BB09]{BB09}
Eiichi Bannai and Etsuko Bannai.
\newblock A survey on spherical designs and algebraic combinatorics on spheres.
\newblock {\em European J. Combin.}, 30(6):1392--1425, 2009.

\bibitem[BGM{\etalchar{+}}19]{BGMPV19}
Dmitriy {Bilyk}, Alexey {Glazyrin}, Ryan {Matzke}, Josiah {Park}, and Oleksandr
  {Vlasiuk}.
\newblock {Optimal measures for p-frame energies on spheres}.
\newblock {\em arXiv e-prints}, page arXiv:1908.00885, Aug 2019.

\bibitem[BT06]{BT06}
Christian Bayer and Josef Teichmann.
\newblock The proof of {T}chakaloff's theorem.
\newblock {\em Proc. Amer. Math. Soc.}, 134(10):3035--3040, 2006.

\bibitem[BW13]{BW13}
Helen Broome and Shayne Waldron.
\newblock On the construction of highly symmetric tight frames and complex
  polytopes.
\newblock {\em Linear Algebra Appl.}, 439(12):4135--4151, 2013.

\bibitem[CW17]{CW17}
Tuan-Yow Chien and Shayne Waldron.
\newblock Nice error frames, canonical abstract error groups and the
  construction of {SIC}s.
\newblock {\em Linear Algebra Appl.}, 516:93--117, 2017.

\bibitem[dlHP04]{HP04}
Pierre de~la Harpe and Claude Pache.
\newblock Spherical designs and finite group representations (some results of
  {E}. {B}annai).
\newblock {\em European J. Combin.}, 25(2):213--227, 2004.

\bibitem[GS81]{GS81}
J.-M. Goethals and J.~J. Seidel.
\newblock Cubature formulae, polytopes, and spherical designs.
\newblock In {\em The geometric vein}, pages 203--218. Springer, New
  York-Berlin, 1981.

\bibitem[Hum90]{H90}
James~E. Humphreys.
\newblock {\em Reflection groups and {C}oxeter groups}, volume~29 of {\em
  Cambridge Studies in Advanced Mathematics}.
\newblock Cambridge University Press, Cambridge, 1990.

\bibitem[HW18a]{HW18II}
Daniel Hughes and Shayne Waldron.
\newblock Spherical half designs of high order.
\newblock preprint, 8 2018.

\bibitem[HW18b]{HW18}
Daniel Hughes and Shayne Waldron.
\newblock Spherical $(t,t)$-designs with a small number of vectors.
\newblock preprint, 8 2018.

\bibitem[JKM19]{JKM19}
John {Jasper}, Emily~J. {King}, and Dustin~G. {Mixon}.
\newblock {Game of Sloanes: Best known packings in complex projective space}.
\newblock {\em arXiv e-prints}, page arXiv:1907.07848, Jul 2019.

\bibitem[KP11]{KP11}
N.~O. Kotelina and A.~B. Pevnyi.
\newblock The {V}enkov inequality with weights and weighted spherical
  half-designs.
\newblock volume 173, pages 674--682. 2011.
\newblock Problems in mathematical analysis. No. 55.

\bibitem[LT09]{LT09}
Gustav~I. Lehrer and Donald~E. Taylor.
\newblock {\em Unitary reflection groups}, volume~20 of {\em Australian
  Mathematical Society Lecture Series}.
\newblock Cambridge University Press, Cambridge, 2009.

\bibitem[MP19]{MP19}
Dustin~G. {Mixon} and Hans {Parshall}.
\newblock {Exact Line Packings from Numerical Solutions}.
\newblock {\em arXiv e-prints}, page arXiv:1902.00552, Jan 2019.

\bibitem[MW19]{MW19}
Mozhgan Mohammadpour and Shayne Waldron.
\newblock Complex spherical designs from group orbits.
\newblock preprint, 8 2019.

\bibitem[PW02]{PW02}
Irine Peng and Shayne Waldron.
\newblock Signed frames and {H}adamard products of {G}ram matrices.
\newblock {\em Linear Algebra Appl.}, 347:131--157, 2002.

\bibitem[RS07]{RS07}
Aidan Roy and A.~J. Scott.
\newblock Weighted complex projective 2-designs from bases: optimal state
  determination by orthogonal measurements.
\newblock {\em J. Math. Phys.}, 48(7):072110, 24, 2007.

\bibitem[RS14]{RS14}
Aidan Roy and Sho Suda.
\newblock Complex spherical designs and codes.
\newblock {\em J. Combin. Des.}, 22(3):105--148, 2014.

\bibitem[Rud80]{R80}
Walter Rudin.
\newblock {\em Function theory in the unit ball of {${\bf C}^{n}$}}, volume 241
  of {\em Grundlehren der Mathematischen Wissenschaften [Fundamental Principles
  of Mathematical Science]}.
\newblock Springer-Verlag, New York-Berlin, 1980.

\bibitem[Sal94]{S94}
Attila Sali.
\newblock On the rigidity of spherical {$t$}-designs that are orbits of finite
  reflection groups.
\newblock {\em Des. Codes Cryptogr.}, 4(2):157--170, 1994.

\bibitem[Ser77]{S77}
Jean-Pierre Serre.
\newblock {\em Linear representations of finite groups}.
\newblock Springer-Verlag, New York-Heidelberg, 1977.
\newblock Translated from the second French edition by Leonard L. Scott,
  Graduate Texts in Mathematics, Vol. 42.

\bibitem[SHC03]{CHS03}
N.J.A. Sloane, R.H. Hardin, and P.~Cara.
\newblock Spherical designs in four dimensions.
\newblock In {\em Proc.\ 2003 IEEE Information Theory Workshop}. IEEE, 2003.

\bibitem[ST54]{ST54}
G.~C. Shephard and J.~A. Todd.
\newblock Finite unitary reflection groups.
\newblock {\em Canadian J. Math.}, 6:274--304, 1954.

\bibitem[SZ84]{SZ84}
P.~D. Seymour and Thomas Zaslavsky.
\newblock Averaging sets: a generalization of mean values and spherical
  designs.
\newblock {\em Adv. in Math.}, 52(3):213--240, 1984.

\bibitem[Via17]{V17}
Maryna~S. Viazovska.
\newblock The sphere packing problem in dimension 8.
\newblock {\em Ann. of Math. (2)}, 185(3):991--1015, 2017.

\bibitem[Wal17]{W17}
Shayne Waldron.
\newblock A sharpening of the {W}elch bounds and the existence of real and
  complex spherical {$t$}-designs.
\newblock {\em IEEE Trans. Inform. Theory}, 63(11):6849--6857, 2017.

\bibitem[Wal18]{W18}
Shayne F.~D. Waldron.
\newblock {\em An introduction to finite tight frames}.
\newblock Applied and Numerical Harmonic Analysis. Birkh\"{a}user/Springer, New
  York, 2018.

\bibitem[Wal19]{W19}
Shayne Waldron.
\newblock Spherical designs and their {Gramian}.
\newblock preprint, 9 2019.

\end{thebibliography}
\nocite{*}
\vfil\eject

\end{document}

\begin{example} Let $G$ be the Heisenberg group in two dimensions
(of order $8$), 
which is generated by 
$$ S=\pmat{0&1\cr1&0}, \quad \gO=\pmat{1&0\cr0&-1}. $$
This is the Shephard-Todd real reflection group $G(2,1,2)$ from 
the second infinite family.
Every orbit is a $(1,1)$-design. For the action of $G$ on $\RR^2$,
pairs of orbits give $(t,t)$-designs for $t=2,3$. Here
$$ f_{G,\RR}^{(2)}(x,y) 
= 3 \prod_{U\in\cU} \inpro{x,Uy}^2
= 3 \prod_{g\in G} \inpro{x,gy} , $$
where
$$\cU=\left\{
\pmat{1&0\cr0&1},
\pmat{0&1\cr1&0},
\pmat{1&0\cr0&-1},
\pmat{0&1\cr-1&0}\right\}. $$

$$ f_{G,\RR}^{(3)}(x,y) =
\hbox{${5\over 4}$} (x_1^2+x_2^2)^2(y_1^2+y_2^2)^2 f_{G,\RR}^{(2)}(x,y). $$
Computations suggest that $f_{G,\RR}^{(2)}$ is a factor 
of both sides of $f_{G,\RR}^{(t)}$, $t\ge4$ (which are not scalar
multiples of each other).
Pairs of orbits do not give $(2,2)$-designs for $\CC^2$.
The quaternians has matrices over $\CC$, but the matrices as the 
dihedral group.
\end{example}

\begin{itemize}
\item Some highly symmetric tight frames have a higher 
strength than a general orbit. In this case a union
with such a frame has a weighting $(1,0)$.
\item In every case considered (all Shephard Todd groups except $34$
which has very high order), the the quadratic has rational coefficients
and a single rational root (of multiplicity 2) which gives a
spherical $(t,t)$-design of at least one $t$ higher, some are ``signed''.
\end{itemize}

Consider lines ...
\cite{BW13}


For an ``affine union'' of three spherical designs, one would have 
two weight parameters to solve for. Presumably, this could be done
diagonalising the associated quadratic form.


\begin{itemize}
\item The Theorem xxx provides some heuristic mechanism for the 
polynomial coefficients to be rational.
\end{itemize}

\begin{example} The being a highly symmetric tight frame is not essential,
rather need orbits with small number of vectors. For $G(2,2,3)$. 
There are highly symmetric tight frames:
$$ 
\pmat{1\cr1\cr1},
\pmat{1\cr-1\cr1},
\pmat{1\cr1\cr-1},
\pmat{1\cr-1\cr-1}, \qquad
\pmat{1\cr0\cr0},
\pmat{0\cr1\cr0},
\pmat{0\cr0\cr 1}, $$
and also the orbit
$$\pmat{1\cr1\cr0},
\pmat{0\cr1\cr1},
\pmat{1\cr0\cr1},
\pmat{1\cr-1\cr0},
\pmat{0\cr1\cr-1},
\pmat{1\cr0\cr-1},$$
which is not highly symmetric, but does give rise to a $(2,2)$-design.
\end{example}

\begin{example} The Shephard-Todd groups ${\rm G}(1,1,d+1)$, i.e., 
$S_{d+1}$ acting on a $d$-dimensional subspace. The real Molien series
indicates all orbits are $(1,1)$-designs, i.e., spherical half-designs
of order $2$. For $\RR^3$ we have a $7=3+4$ vector half-design of
order $4$, with $n=(3,4)$, $\gb=({2\over5},{3\over5})$, $w=({14\over15},{21\over20})$. This is a orthonormal basis and the vertices of a simplex.
By comparison this gives $14$-vector weighted $5$-design (compared with
$12$ vectors of icosahedron).

For $\RR^4$, there is a $15$-vector half-design of order $4$,
$n=(5,10)$, $\gb=({2\over5},{3\over5})$,
$w=({6\over5},{9\over10})$. There is a $24$-vector $5$-design
(compared with $30$ vectors).
\end{example}

\begin{example}
In $\CC^3$ a $(3,3)$-design as a union of a SIC and a maximal set of MUBs.
Examples of unions of SICs?
\end{example}

\begin{example} There is a spherical half-design of $12$ vectors
(lines) for $\RR^4$ of order $4$ ($t=2$). It is given by 
the orbit of $(1,1,0,0)$ under $G(2,2,4)$ or $G(2,1,4)$
(three real MUBs?). Yes, there exist ${d\over2}+1$ real MUBs
when $d=4^j$. For $m$ MUBs in $\Rd$, the variational equality is
$$ md(1+(d-1)0+(md-d){1\over d^2}) = {1\cdot 3
\over d(d+2)} (md)^2. $$
There is also a two orbit one, with equal weights, 
given by the orbit of $(1,0,0,0)$ and $(1,1,1,1)$ under
$G(2,1,4)$. 
\end{example}

\vfil
\end{document}

[ 9744, 4, 7 ],
    [ 9744, 5, 7 ],
    [ 12474, 4, 6 ],
    [ 12474, 5, 6 ],
    [ 14112, 3, 4 ],
    [ 14112, 3, 5 ],
    [ 18144, 4, 5 ],
    [ 27342, 1, 8 ],
    [ 27888, 1, 7 ],
    [ 30618, 1, 6 ],
    [ 32256, 1, 3 ],
    [ 36288, 1, 4 ],
    [ 36288, 1, 5 ],
    [ 45486, 2, 8 ],
    [ 46032, 2, 7 ],
    [ 48762, 2, 6 ],
    [ 50400, 2, 3 ],
    [ 54432, 2, 4 ],
    [ 54432, 2, 5 ],
    [ 72576, 1, 2 ]
]
> Order(G);                  
39191040